\theoremstyle{plain}
  \newtheorem{thm}{Theorem}[section]
  \newtheorem{prop}[thm]{Proposition}
  \newtheorem{lemma}[thm]{Lemma}
  \newtheorem{rem}[thm]{Remark}
\newcommand{\INT}[1]{{\overset{\circ}{#1}}}
\begin{document} 
\title{The Elliptic Hypergeometric Functions Associated to the Configuration 
Space of Points on an Elliptic Curve I : Twisted Cycles} 
\author{Ko-Ki Ito}   
\maketitle

\abstract{We consider the Euler type integral associated to the configuration space of points on an elliptic curve, 
which is an analogue of the hypergeometric function associated to the configuration space of points on a 
projective line.  
We calculate the {\it twisted homology group}, 
with coefficients in the local system associated to a power function $g^{\alpha}$ of an elliptic function $g$, 
and the intersection form. 
Applying these calculations, we describe the {\it connection matrices} representing the 
linear isomorphisms induced 
from analytic continuations of the functions defined by the integrations of $g^{\alpha}$ 
over twisted cycles.}

{\bf {Key Words and Phrases}}: 
elliptic curve, 
twisted homology,    
Euler type integral, 
connection matrix. 

{\bf {2000 Mathematics Subject Classification Numbers}}: 33C99

\section{Introduction} 
In many cases, 
hypergeometric functions 
have (or are defined by) 
Euler type integrals. 
In fact, the hypergeometric function associated to the configuration space of 
points on $\Bbb{P} ^1$ has 
the following integral representation: 
\begin{align*} 
 \int _{x_i} ^{x_j} ( t - x_1) ^{\alpha _1} (t-x_2 ) ^{\alpha _2} \cdots (t-x_n ) ^{\alpha _n} dt . 
\end{align*} 
In this paper, 
instead of $\Bbb{P} ^1$, 
we shall consider the configuration space of points on an elliptic curve.   

Let $C$ be an elliptic curve defined by 
$\Bbb{C} / \Gamma$, 
where 
$\Gamma := \Bbb{Z} \omega _1 \oplus \Bbb{Z} \omega _2$ 
is a lattice generated by $\omega _1$ and $\omega _2$. 
We shall denote a point on $C$ by an equivalence class $\overline{x} \in C$ 
 represented by $x \in \Bbb{C}$. 
For a triple 
$q= ( \overline{x_0} , \overline{x_1} , \overline{x_2} )$ of points on $C$, 
we denote by  
$X_q$ by the punctured elliptic curve, 
that is,  
the open Riemann surface deprived of four points  
( $ \overline{x_0} , \overline{x_1} , \overline{x_2} , \overline{0} \in C$).  
Our main objective is the integration
of  the multi-valued function $g ^{\alpha}$, 
where 
$\alpha \in \Bbb{C} \setminus  
\left( \frac{1}{2} \Bbb{Z} \cup \frac{1}{3} \Bbb{Z} \right)$ 
is a fixed complex number,  
whose branch points are 
$ \overline{x_0} , \overline{x_1} , \overline{x_2} , \overline{0} \in C$. 
The function $g$ is expressed by Weierstra{\ss} $\sigma$ function: 
\begin{align*} 
g 
=  
\frac{\sigma (t-x_0 ) \sigma (t-x_1) \sigma (t-x_2) }
{\sigma ^3 (t) \sigma (x_0) \sigma (x_1) \sigma (x_2)} .  
\end{align*}  
It depends only on the equivalence class $\overline{t}$. 
So this function $g$ is a meromorphic function on $C$ holomorphic on $X_q$, 
which is a counterpart of rational functions on $\Bbb{P} ^1$. 
Thus the function $g^{\alpha}$ can be seen as an analogue of 
the multi-valued function 
$t ^{-3\alpha} (t -x_0 ) ^{\alpha} (t-x_1 )^{\alpha} (t-x_2) ^{\alpha}$ 
on $\Bbb{P} ^1$.     
However, it may depend on a representative $x_0$ 
(resp. $x_1$, $x_2$) of the equivalence 
class $\overline{x_0}$ (resp. $\overline{x_1}$, $\overline{x_2}$). 
To avoid this, we assume that $x_0 +x_1 +x_2 =0$.   
The integration of $g^{\alpha} dt$ over a path $\Xi _{\mu ,q}$ 
(mentioned below) lying on $X_q$ 
\begin{align} \label{int0} 
F _{\mu} (q) := \int _{\Xi _{\mu ,q}} g^{\alpha} dt   
\end{align} 
is a multi-valued function with respect to $q$ defined over the {\it configuration space} S 
(defined later), which is a subset of  
$\overline{S} := \left\{  q = ( \overline{x_0} , \overline{x_1} , \overline{x_2} )  \ 
\right| \left.  x_0 + x_1 + x_2 =0   \right\} \subset  C \times C \times C $.  

This function $F_{\mu}$ has {\it singularities} : 
Put 
$D ^{ij} := \left\{  q = ( \overline{x_0} , \overline{x_1} , \overline{x_2} )  \in \overline{S} \ 
\right| \left.  x_i -x_j \in \Gamma    \right\}$ and  
$D ^{i} _{\infty} :=  \big\{ 
q = ( \overline{x_0} , \overline{x_1} , \overline{x_2} )  \in \overline{S} \ 
\big| 
 \   x_i  \in \Gamma    
\big\} 
$; 
when $q$ belongs to $D^{ij}$, 
the topological type of $X_q$ differs from that of 
an punctured Riemann surface deprived of four points from $C$ ; 
when $q$ belongs to $D^i _{\infty}$, 
the integrand $g^{\alpha}$ is divergent.  
Hence we define a domain $S$ of the function $F_{\mu}$ 
by removing the singular loci $D^{ij}$ and $D^i _{\infty}$ from $\overline{S}$. 

We introduce an integration path $\Xi _{\mu ,q}$ 
of ({\ref{int0}}), the so-called {\it twisted cycle}. 
Aomoto theory (\cite{aomoto}) tells us that 
a path of an Euler type integral can be regarded as a homology class 
with coefficients in the local system defined by the multi-valuedness of the integrand. 
In our case, 
the local system $\mathcal{L} _{X_q}$ is defined by the multi-valuedness of $g^{\alpha}$ 
and we denote by  $\{ \Xi _{\mu ,q} \} _{\mu}$ generators of $H_1 ( X_q , \mathcal{L} _{X_q} )$.   
In {\S}{\ref{sec_twisted_cycles}}, we give the concrete description of $\{ \Xi _{\mu ,q} \} _{\mu}$  
(Theorem {\ref{thm_twisted_cycles}}). 
In {\S}{\ref{sec_intersection_form}}, we calculate the intersection form among the twisted cycles 
(Theorem {\ref{intersection_form}}). 

The function $F_{\mu}$ on $S$ can be seen as a flat section of a certain vector bundle over $S$  
or, equivalently, 
a solution of a certain linear differential equation({\cite{ito}}) :  
The sheaf $\mathcal{S}$  of germs of functions defined by 
$\Bbb{C}$-linear combinations of $\{ F_{\mu} \} _{\mu}$ 
is locally constant and $\mathcal{S} \otimes \mathcal{O} _S$ 
is a flat (trivial) vector bundle over $S$. 
The analytic continuation along a path $\gamma$ in $S$ induces the linear isomorphism 
between the stalks of $\mathcal{S}$ over the initial point  and  the terminal point of $\gamma$ ;   
the corresponding matrix is said to be the {\it connection matrix}. 
In {\S}{\ref{sec_connection_matrix}}, 
we consider the paths in $S$ connecting two points corresponding to a configuration 
$\left( \overline{\frac{\omega _i}{2}} , \overline{\frac{\omega _j}{2}} ,\overline{\frac{\omega _k}{2}} \right)$ 
and another one given by interchanging two of the three points. 
For these paths, we calculate the connection matrices (Theorem {\ref{thm_connection_matrix}}). 

Finally, we briefly mention the relation with the recent work of H.Watanabe({\cite{watanabe1}}{\cite{watanabe2}}). 
He considers there a {\it fixed} configuration ; the two-torsion points are removed from elliptic curves, 
which are deformed by the moduli parameter. 
He focuses the action of modular transformations on such elliptic curves and 
studies that action on (co)homology groups of elliptic curves deprived of the two-torsion points 
with coefficients in certain local systems, 
which are of different kind of ours. 
In this context, 
he interprets the monodromy transformations of Gau{\ss}'s hypergeometric function into 
the modular transformations. \\ 

{\textbf{Acknowledgments}}.    
The author would like to thank H.Watanabe and T.Mano for fruitful discussions. 
The author is sincerely grateful to K.Takamatsu for his guidance.
The author would like to thank also Professors F.Kato and K.Fukaya 
for valuable comments and encouragements.

\section{The local system} \label{sec.loc} 
For a triple $q=\left( \overline{x_0} , \overline{x_1} , \overline{x_2} \right)$ 
of points on $C= \Bbb{C} / \Gamma$, 
we denote by $X_q$ an open Riemann surface deprived of four points 
$\overline{x_0} , \overline{x_1} , \overline{x_2} , \overline{0}$ 
from $C$. 
We assume that $x_0 + x_1 + x_2 =0$ throughout this paper. 
We shall introduce a local system 
$\mathcal{L} _{X_q}$ which reflects the data of multi-valuedness of the function 
$g ^{\alpha}$ defined as follows. 
Let $g$ be the following holomorphic function on $X_q$: 
\begin{align*} 
g:= \frac{\sigma (t-x_0 ) \sigma (t-x_1) \sigma (t-x_2) }
{\sigma ^3 (t) \sigma (x_0) \sigma (x_1) \sigma (x_2)} . 
\end{align*}  
Using the identity $\frac{\sigma ^{\prime} (t)}{\sigma (t) }=\zeta (t)$, we have  
$\frac{dg}{g} 
=( \zeta ( t-x_0 ) + \zeta (t-x_1 ) +\zeta (t-x_2 ) -3\zeta (t) ) dt$. 
We shall consider the following sheaf 
$\mathcal{L} _{X_q}$: 
\begin{align*} 
\mathcal{L} _{X_q}  := 
\mathrm{Ker} 
\left( 
d - \alpha \frac{dg}{g} : 
\mathcal{O} _{X_q} 
\longrightarrow 
\Omega ^1 _{X_q} 
\right) ,  
\end{align*} 
where $\alpha \in \Bbb{C} \setminus  
\left( \frac{1}{2} \Bbb{Z} \cup \frac{1}{3} \Bbb{Z} \right)$ 
is a fixed complex number. 
This sheaf has no global section on $X_q$. 
We take the open part $\INT{X_q}$ in $X_q$ such that 
$\mathcal{L} _{X_q}$ has global sections on $\INT{X_q}$: 
\begin{align*} 
\INT{X_q} := C \setminus 
\left( \overline{0 x_0} \cup \overline{0 x_1} \cup \overline{0 x_2} \right) ,  
\end{align*} 
where $\overline{0 x_i}$ is the image of 
the segment $[0,x_i]$ in $\Bbb{C}$ through the natural projection 
$\Bbb{C} \longrightarrow C$. 
\begin{lemma}  \label{lemma3.1} 
\begin{enumerate} 
\item The sheaf 
$\mathcal{L} _{X_q}$ is a local system of rank $1$ over $\Bbb{C}$
 whose local sections are generated by a branch of $g^{\alpha}$. 
\item The sheaf $\mathcal{L} _{X_q}$ has a non-zero global section 
$\varsigma$ on $\INT{X_q}$. 
\end{enumerate} 
\end{lemma}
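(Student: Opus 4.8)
The plan is to establish both parts by a direct local analysis of the differential operator $\nabla = d - \alpha \frac{dg}{g}$, exploiting the fact that the logarithmic derivative $\frac{dg}{g}$ is a fixed holomorphic $1$-form on $X_q$. For part (1), I would first note that on any simply connected open set $U \subset X_q$, a holomorphic branch of $\log g$ exists (since $g$ is nowhere zero and holomorphic on $X_q$), hence a branch $g^{\alpha} = \exp(\alpha \log g)$ is a well-defined holomorphic function on $U$. A direct computation shows $\nabla(g^\alpha) = \alpha g^{\alpha-1} dg - \alpha \frac{dg}{g} g^\alpha = 0$, so $g^\alpha \in \mathcal{L}_{X_q}(U)$. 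Conversely, any local solution $f$ of $\nabla f = 0$ satisfies $\frac{df}{f} = \alpha \frac{dg}{g}$ wherever $f \neq 0$, so $d(f/g^\alpha) = 0$ on $U$; thus $f$ is a constant multiple of the chosen branch. This shows $\mathcal{L}_{X_q}(U) \cong \mathbb{C}$ for simply connected $U$, which is precisely the statement that $\mathcal{L}_{X_q}$ is a rank-$1$ local system over $\mathbb{C}$ with local generator $g^\alpha$.

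For part (2), the key point is that on $\INT{X_q} = C \setminus (\overline{0x_0} \cup \overline{0x_1} \cup \overline{0x_2})$ one can choose a single-valued branch globally. The strategy is to verify that $\INT{X_q}$ is simply connected — or at least that the monodromy of $g^{\alpha}$ around every loop in $\INT{X_q}$ is trivial — so that the local branches from part (1) patch into a global section $\varsigma$. The removed set consists of three segments all emanating from the common point $\overline{0}$; cutting the torus along these three arcs joined at $\overline{0}$ removes a connected ``tripod'' that breaks all the nontrivial cycles of $C$ together with the loops encircling the punctures $\overline{x_0}, \overline{x_1}, \overline{x_2}, \overline{0}$. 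I would make this precise by showing that $\INT{X_q}$ is homeomorphic to an open disk (equivalently, that the cut surface is simply connected), from which the existence of a global logarithm of $g$, and hence of a nonvanishing global section $\varsigma = g^\alpha$, follows immediately.

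The main obstacle I anticipate is the topological verification in part (2): one must check carefully that removing the three segments $\overline{0x_i}$ really does kill the fundamental group, i.e. that the four branch points together with the two independent $1$-cycles of the torus are all ``caught'' by this tripod-shaped cut. The subtlety is that each segment $\overline{0x_i}$ joins the puncture $\overline{0}$ to the puncture $\overline{x_i}$, so the cut is a connected graph (three edges meeting at $\overline{0}$) whose removal must be shown to simply-connect the punctured torus $X_q$. I would handle this by building an explicit deformation retraction of the cut surface onto a point, or by a Euler-characteristic and monodromy count: the local monodromy of $g^\alpha$ around each puncture $\overline{x_i}$ is $e^{2\pi i \alpha}$ and around $\overline{0}$ is $e^{-6\pi i \alpha}$ (reading off the orders of the zeros and poles of $g$), and every loop in $\INT{X_q}$ must be contractible in $X_q$ after accounting for these, forcing trivial monodromy and hence a single-valued branch on $\INT{X_q}$.
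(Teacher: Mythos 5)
Your part (1) is correct and is essentially the paper's argument (the paper cites the local theory of first-order linear ODEs where you verify the two directions by hand); no issue there.

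Part (2), however, contains a genuine gap: the topological claim on which your whole strategy rests is false. The removed set $\overline{0x_0}\cup\overline{0x_1}\cup\overline{0x_2}$ is a tripod, i.e.\ a \emph{tree}, hence contractible, and deleting a contractible graph from a torus cannot kill the fundamental group: a regular neighborhood of the tripod is a disk, so $\INT{X_q}$ has the homotopy type of a torus minus a point, which is $S^1\vee S^1$ — exactly what the paper states and uses. (Check with Euler characteristics: $\chi(\INT{X_q})=\chi(C)-\chi(\mathrm{tripod})=0-1=-1$, whereas a disk has $\chi=1$.) The two torus cycles $l_{\omega_1},l_{\omega_2}$ can be pushed off the tripod and survive in $\INT{X_q}$, so single-valuedness of $g^{\alpha}$ there is \emph{not} automatic. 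Your fallback "monodromy count" does not repair this, because it presumes every loop in $\INT{X_q}$ is, up to loops around the punctures, contractible in $X_q$; but $l_{\omega_1}$ and $l_{\omega_2}$ are not contractible in $X_q$ (nor in $C$) and encircle no puncture, so the local monodromies $e^{2\pi\sqrt{-1}\alpha}$ and $e^{-6\pi\sqrt{-1}\alpha}$ tell you nothing about them.

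What actually has to be proved is that the periods $\int_{l_{\omega_i}}\frac{dg}{g}$ vanish (a priori they are only integer multiples of $2\pi\sqrt{-1}$, the winding numbers of the elliptic function $g$ along the two cycles). This is the analytic core of the paper's proof: writing $\frac{dg}{g}=(\zeta(t-x_0)+\zeta(t-x_1)+\zeta(t-x_2)-3\zeta(t))\,dt$, one differentiates $I_i(x_0,x_1,x_2):=\int_{l_{\omega_i}}\frac{dg}{g}$ with respect to $x_j$ using $\zeta'=-\wp$, finds $\frac{\partial I_i}{\partial x_j}=\int_{l_{\omega_i}}\wp(t)\,dt=\pi_i$ independent of $j$, so that $I_i=\pi_i(x_0+x_1+x_2)+I_i(0,0,0)=0$ by the standing hypothesis $x_0+x_1+x_2=0$ and $I_i(0,0,0)=0$. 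Note that your proposal never invokes the hypothesis $x_0+x_1+x_2=0$ (as an exact equality in $\Bbb{C}$, not merely modulo $\Gamma$); that is the decisive ingredient, since for representatives with $x_0+x_1+x_2$ a nonzero lattice point these winding numbers are nonzero and $g^{\alpha}$ genuinely fails to be single-valued on $\INT{X_q}$. Any correct proof of part (2) must contain a computation of this kind.
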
 
\begin{proof} 
\begin{enumerate} 
\item 
The function $g^{\alpha}$ is one of the solutions of 
the first-order ordinary linear differential equation 
$\left( d - \alpha \frac{dg}{g} \right) \varsigma =0$. 
By the local theory of ordinary differential equations, 
local solutions of a first order linear differential equation are generated by 
a non-zero solution,
whence the assertion. 
\item 
We shall prove that a single-valued solution of 
the equation $\left( d - \alpha \frac{dg}{g} \right) \varsigma =0$ 
on $\INT{X_q}$ does exist. 
The logarithm of its solution 
 is given by an integration of $\alpha \frac{dg}{g}$ over an arc. 
The single-valuedness is equivalent to the condition that 
the integration over any closed loop vanishes.   
Note that $\INT{X_q}$ is homotopic to $S^1 \vee S^1$. 
In fact, we can pick two loops $l_{\omega _1}$, $l_{\omega _2}$ on $\INT{X_q}$ 
such that the inclusion map 
$l_{\omega _1} \cup l_{\omega _2} \hookrightarrow \INT{X_q}$ 
is a deformation retract. 
That is the reason why we verify 
the integrations of $\frac{dg}{g}$ over those two loops $l_{\omega _1}$, 
$l_{\omega _2}$ vanish. 
We denote by $I_i (x_0 ,x_1 ,x_2 )$ these integrations: 
\begin{align*} 
I_i (x_0 , x_1 , x_2 ) := \int _{l_{\omega _i}} \frac{dg}{g} 
= \int _{l_{\omega _i}}
 ( \zeta ( t-x_0 ) + \zeta (t-x_1 ) +\zeta (t-x_2 ) -3\zeta (t) ) dt . 
\end{align*}   
Using the identity $\zeta ^{\prime} (t) =-\wp (t)$, 
we have 
$\frac{\partial I _i}{\partial x_j} = \int _{l_{\omega _i}} \wp ( t-x_j) dt
= \int _{l_{\omega _i}} \wp ( t ) dt$. 
This is a constant, which we denote by $\pi _i$.  
Then the equality 
$I _i (x_0 ,x_1 ,x_2 ) = \pi _i x_0 + \pi _i x_1 + \pi _i x_2 + I_i (0,0,0)$ holds. 
Note that $x_0 + x_1 +x_2 =0$ (by the assumption) and 
$I_i (0,0,0) = 0$ (by its definition). 
Hence the integrations $I_i (x_0 ,x_1 ,x_2)$ vanish.  
\end{enumerate} 
\end{proof} 
Now we pick a non-zero global section $\varsigma$ over $\INT{X_q}$ 
and fix it once for all in the sequel. 
For a path $l : I \longrightarrow X_q$ with the initial point  
in $\INT{X_q}$, 
we denote by $\varsigma _l$ the analytic continuation 
of $\varsigma$ along $l$. 
\begin{rem} \label{rem3.2} 
Note that $X_q$ is homotopic to 
the bouquet $\underbrace{S^1 \vee \cdots \vee S^1}_{5 \mathrm{copies}}$. 
In fact, we can pick five loops $l_0$, $l_1$, $l_2$, $l_{\omega _1}$, 
$l_{\omega _2}$ such that 
the inclusion map 
$l_0 \cup l_1 \cup l_2 \cup l_{\omega _1} \cup l_{\omega _2} 
\hookrightarrow X_q$ is a deformation retract. 
The loops $l_{\omega _1}$, $l_{\omega _2}$ are the same ones 
as in  the proof of lemma {\ref{lemma3.1}}. 
The loop $l_i$ is a closed curve  around the point $\overline{x_i}$.  
The representation $\rho : \pi _1 (X_q ) \longrightarrow \Bbb{C} ^{\ast}$ 
corresponding to 
the local system $\mathcal{L} _{X_q}$ is given by the formulae  
$\rho ( l_{i} ) = c$, $\rho ( l _{\omega _i} ) = 1$, 
where $c:=e^{2 \pi \sqrt{-1} \alpha}$.  
\end{rem}

\section{The twisted homology} \label{sec_twisted_cycles}
We shall find  generators of  the {\it twisted homology group} 
$H_1 ( X_q , \mathcal{L} _{X_q} )$.  

The $k$-chain group 
$C_k ( X_q , \mathcal{L} _{X_q} )$ 
with coefficients in $\mathcal{L} _{X_q}$  
is the complex vector space with basis 
$\sigma \otimes s _{\sigma}$, 
where $\sigma : \Delta \longrightarrow X_q$ is a singular $k$-simplex 
and $s _{\sigma}$ is a section of $\mathcal{L} _{X_q}$ over $\sigma$. 
The first boundary operator  
$\partial : C_1 ( X_q , \mathcal{L} _{X_q} ) \longrightarrow 
C_0 ( X_q , \mathcal{L} _{X_q} ) $ is given by 
$\partial ( l \otimes s_l ) 
= l(1) \otimes s _{l ,l(1) } - l(0) \otimes s_{l , l(0)}$.  
The other boundary operators are given in similar fashions.  
The 1-st {\it twisted homology group} 
$H_1 ( X_q , \mathcal{L} _{X_q} )$ 
is 
the 1-st homology of this complex, 
whose elements are called the {\it twisted cycles}. 
For two loops $l_{\omega _1}$, $l_{\omega _2}$ 
in the proof of lemma {\ref{lemma3.1}}, 
we denote by  $\Xi _{\omega _1 , q}$ 
(resp. $\Xi _{\omega _2 , q}$) 
the twisted cycle  
$ l_{\omega _1} \otimes \varsigma _{l_{\omega _1 }}$ 
(resp. 
$ l_{\omega _2} \otimes \varsigma _{l_{\omega _2 }}$),   
where $\varsigma$ is given in {\S}{\ref{sec.loc}}.  
We introduce  other twisted cycles $\Xi _{(ij) ,q}$,  
which are   the {\it regularizations} 
of the segments $\overline{x_i x_j}$:  
\begin{align*} 
\Xi _{(ij) ,q} 
:= 
\frac{1}{c -1 } \overline{S _{\epsilon} (x_i)} 
\otimes 
\varsigma _{\overline{S _{\epsilon} (x_i)} } 
+ \overline{[ x_i +\epsilon , x_j -\epsilon ]} 
\otimes 
\varsigma _{\overline{[ x_i +\epsilon , x_j -\epsilon ]}} 
- \frac{1}{c -1 } \overline{S _{\epsilon} (x_j)} 
\otimes 
\varsigma _{\overline{S _{\epsilon} (x_j)} } , 
\end{align*} 
where $\epsilon$ is a sufficiently small positive number; 
we denote by 
$[ x_i +\epsilon , x_j -\epsilon ]$ 
the oriented segment given by truncating each of both ends of the segment $[x_i , x_j]$ 
by the $\epsilon$-circle centered at $x_i$ or $x_j$; 
we denote by 
$S _{\epsilon} (x _i )$  a closed loop homotopic 
to the  $\epsilon$-circle centered at $x_i$
both of whose initial and terminal points  
are at the initial point of the segment $ [ x_i +\epsilon , x_j +\epsilon ] $,  
and 
$S _{\epsilon} (x_i )$ is defined in exactly the same way; 
we shall indicate by $ ^{\overline{\ \ }}$ 
an image through the natural projection $\Bbb{C} \longrightarrow  C$.   
Note that if $\epsilon$ is 
sufficiently small, this definition is independent of $\epsilon$.  
\begin{center}
\begin{minipage}{.5\textwidth} 
\unitlength 0.1in
\begin{picture}( 34.3500, 24.0000)(  4.8500,-28.0000)
%
\special{pn 4}%
\special{pa 920 1600}%
\special{pa 2320 1600}%
\special{fp}%
\put(21.4000,-16.5000){\makebox(0,0)[lt]{$0$}}%
%
\special{pn 4}%
\special{pa 2720 400}%
\special{pa 1520 2800}%
\special{fp}%
%
\special{pn 4}%
\special{pa 1920 1600}%
\special{pa 3320 1600}%
\special{fp}%
%
\special{pn 20}%
\special{pa 1120 2200}%
\special{pa 2320 1000}%
\special{fp}%
%
\special{pn 13}%
\special{ar 2520 800 50 50  0.0000000 6.2831853}%
%
\special{pn 13}%
\special{ar 920 2400 50 50  0.0000000 6.2831853}%
%
\special{pn 13}%
\special{ar 2120 1600 50 50  0.0000000 6.2831853}%
%
\special{pn 13}%
\special{ar 3320 800 50 50  0.0000000 6.2831853}%
%
\special{pn 13}%
\special{ar 2920 1600 50 50  0.0000000 6.2831853}%
%
\special{pn 13}%
\special{ar 1320 1600 50 50  0.0000000 6.2831853}%
%
\special{pn 13}%
\special{ar 1720 2400 50 50  0.0000000 6.2831853}%
%
\special{pn 13}%
\special{ar 2520 2400 50 50  0.0000000 6.2831853}%
%
\special{pn 13}%
\special{ar 1720 800 50 50  0.0000000 6.2831853}%
%
\special{pn 20}%
\special{ar 2520 800 280 280  0.0000000 6.2831853}%
%
\special{pn 20}%
\special{ar 920 2400 280 280  0.0000000 6.2831853}%
%
\special{pn 20}%
\special{ar 2920 1600 280 280  0.0000000 6.2831853}%
%
\special{pn 20}%
\special{pa 1520 1800}%
\special{pa 1608 1748}%
\special{fp}%
\special{pa 1520 1800}%
\special{pa 1574 1712}%
\special{fp}%
%
\special{pn 20}%
\special{pa 1186 2296}%
\special{pa 2656 1700}%
\special{fp}%
%
\special{pn 20}%
\special{pa 2790 1346}%
\special{pa 2646 1050}%
\special{fp}%
%
\special{pn 20}%
\special{pa 2180 1896}%
\special{pa 2078 1904}%
\special{fp}%
\special{pa 2180 1896}%
\special{pa 2094 1950}%
\special{fp}%
%
\special{pn 20}%
\special{pa 2690 1146}%
\special{pa 2712 1246}%
\special{fp}%
\special{pa 2690 1146}%
\special{pa 2758 1224}%
\special{fp}%
%
\special{pn 20}%
\special{pa 2710 600}%
\special{pa 2764 688}%
\special{fp}%
\special{pa 2710 600}%
\special{pa 2798 654}%
\special{fp}%
%
\special{pn 20}%
\special{pa 2920 1880}%
\special{pa 2820 1856}%
\special{fp}%
\special{pa 2920 1880}%
\special{pa 2820 1906}%
\special{fp}%
%
\special{pn 20}%
\special{pa 2120 1600}%
\special{pa 2520 800}%
\special{da 0.070}%
%
\special{pn 20}%
\special{pa 2120 1600}%
\special{pa 2920 1600}%
\special{da 0.070}%
%
\special{pn 20}%
\special{pa 2120 1600}%
\special{pa 920 2400}%
\special{da 0.070}%
%
\special{pn 20}%
\special{pa 1720 800}%
\special{pa 2320 400}%
\special{da 0.070}%
\special{pa 2320 400}%
\special{pa 2320 400}%
\special{da 0.070}%
%
\special{pn 20}%
\special{pa 3320 800}%
\special{pa 3920 400}%
\special{da 0.070}%
%
\special{pn 20}%
\special{pa 2520 2400}%
\special{pa 3420 1800}%
\special{da 0.070}%
%
\special{pn 20}%
\special{pa 1720 2400}%
\special{pa 1520 2800}%
\special{da 0.070}%
%
\special{pn 20}%
\special{pa 1320 1600}%
\special{pa 920 1600}%
\special{da 0.070}%
%
\special{pn 20}%
\special{ar 2520 800 350 350  3.1415927 6.2831853}%
%
\special{pn 20}%
\special{ar 2920 1600 348 348  5.1696672 6.2831853}%
\special{ar 2920 1600 348 348  0.0000000 2.0344439}%
%
\special{pn 20}%
\special{pa 3076 1286}%
\special{pa 3280 880}%
\special{fp}%
%
\special{pn 20}%
\special{ar 3320 800 200 200  6.2831853 6.2831853}%
\special{ar 3320 800 200 200  0.0000000 3.1415927}%
%
\special{pn 20}%
\special{pa 2870 800}%
\special{pa 3120 800}%
\special{fp}%
%
\special{pn 20}%
\special{pa 2170 800}%
\special{pa 1870 800}%
\special{fp}%
%
\special{pn 20}%
\special{ar 2520 2400 126 126  2.0736395 5.1421513}%
%
\special{pn 20}%
\special{pa 2766 1910}%
\special{pa 2576 2280}%
\special{fp}%
%
\special{pn 20}%
\special{pa 920 2680}%
\special{pa 820 2656}%
\special{fp}%
\special{pa 920 2680}%
\special{pa 820 2706}%
\special{fp}%
%
\special{pn 20}%
\special{pa 2120 800}%
\special{pa 2020 776}%
\special{fp}%
\special{pa 2120 800}%
\special{pa 2020 826}%
\special{fp}%
%
\special{pn 20}%
\special{pa 2720 2000}%
\special{pa 2654 2078}%
\special{fp}%
\special{pa 2720 2000}%
\special{pa 2698 2100}%
\special{fp}%
\put(20.2000,-20.0000){\makebox(0,0)[lt]{$\Xi _{(01) ,q}$}}%
\put(18.3500,-14.3000){\makebox(0,0)[rb]{$\Xi _{(20),q}$}}%
\put(27.3500,-11.6500){\makebox(0,0)[lb]{$\Xi _{(12) ,q}$}}%
\put(31.3500,-11.6500){\makebox(0,0)[lt]{$\Xi _{\omega _2 ,q}$}}%
\put(29.3500,-7.6500){\makebox(0,0)[lb]{$\Xi _{\omega _1 ,q}$}}%
\put(24.7000,-7.3000){\makebox(0,0)[lb]{$x_2$}}%
\put(29.2500,-16.7500){\makebox(0,0)[lt]{$x_1$}}%
\put(8.2500,-24.6000){\makebox(0,0)[lt]{$x_0$}}%
\end{picture}%
 \end{minipage} 
\end{center} 
\begin{thm}[generators of $H_1$] \label{thm_twisted_cycles} \ 
\begin{enumerate} 
\item The chains 
$\Xi _{(01) ,q}$, $\Xi _{(12) ,q}$, $\Xi _{(20) ,q}$, $\Xi _{\omega _1 ,q}$, 
$\Xi _{\omega _2 ,q}$ are cycles and generate 
$H _1 ( X_q , \mathcal{L} _{X_q} )$. 
\item We have the relation 
$\Xi _{(01) ,q} + \Xi _{(12) ,q} + \Xi _{(20) ,q} =0$ in 
$H _1 ( X_q , \mathcal{L} _{X_q} )$. 
\end{enumerate} 
\end{thm}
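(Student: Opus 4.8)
The plan is to prove both assertions simultaneously by transporting the five chains into the twisted cellular chain complex of the bouquet model of $X_q$ furnished by Remark \ref{rem3.2}, where the homology is transparent. First I would check that the five chains are genuine cycles. The chains $\Xi_{\omega_1,q},\Xi_{\omega_2,q}$ are images of the honest loops $l_{\omega_1},l_{\omega_2}$ tensored with $\varsigma$; since $\rho(l_{\omega_i})=1$ by Remark \ref{rem3.2}, the continuation of $\varsigma$ returns to itself and $\partial(l_{\omega_i}\otimes\varsigma)=(1-1)\,p=0$, where $p$ denotes the basepoint $0$-chain. For $\Xi_{(ij),q}$ I would compute the boundary termwise: writing $P_i=x_i+\epsilon$ and $Q_j=x_j-\epsilon$, the middle segment contributes $Q_j\otimes\varsigma-P_i\otimes\varsigma$, while $\overline{S_\epsilon(x_i)}$ encircles $x_i$ once, so $\partial(\overline{S_\epsilon(x_i)}\otimes\varsigma)=(c-1)(P_i\otimes\varsigma)$, and likewise at $x_j$. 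The prefactor $\tfrac{1}{c-1}$ is chosen precisely so that $\tfrac{1}{c-1}(c-1)(P_i\otimes\varsigma)$ cancels the segment's initial endpoint and the analogous term cancels $Q_j\otimes\varsigma$; hence $\partial\Xi_{(ij),q}=0$. This step uses only $c\neq1$, which holds because $\alpha\notin\tfrac12\Bbb{Z}$ forces $\alpha\notin\Bbb{Z}$.

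Next I would compute $H_1(X_q,\mathcal{L}_{X_q})$ from the deformation retract onto the bouquet $W=l_0\cup l_1\cup l_2\cup l_{\omega_1}\cup l_{\omega_2}$. This CW model has one $0$-cell, five $1$-cells, and no $2$-cells, so with twisted coefficients its chain complex is
\begin{align*}
0\longrightarrow \Bbb{C}^5 \overset{\partial}{\longrightarrow} \Bbb{C} \longrightarrow 0,
\end{align*}
with $\partial(l_i\otimes\varsigma)=(c-1)\,p$ for $i=0,1,2$ and $\partial(l_{\omega_j}\otimes\varsigma)=0$. Since $c\neq1$, the map $\partial$ is surjective, so $H_0=0$ and $H_1\cong\ker\partial$ is $4$-dimensional, with explicit basis $\{\,l_0-l_1,\ l_1-l_2,\ l_{\omega_1},\ l_{\omega_2}\,\}$ (each tensored with $\varsigma$).

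The heart of the argument is to identify the images of the five chains under the isomorphism induced by the retraction $r:X_q\to W$. I would establish
\begin{align*}
r_*\Xi_{(ij),q}=\tfrac{1}{c-1}\big[(l_i-l_j)\otimes\varsigma\big],\qquad r_*\Xi_{\omega_j,q}=\big[l_{\omega_j}\otimes\varsigma\big]
\end{align*}
in $H_1(W,\mathcal{L})=\ker\partial$. The two regularizing circles retract onto $l_i$ and $l_j$, producing the terms $\tfrac{1}{c-1}l_i$ and $-\tfrac{1}{c-1}l_j$; the content of the claim is that the connecting segment contributes nothing further to the cellular $1$-cycle. Geometrically this is because a thin neighbourhood of $\overline{[x_i+\epsilon,x_j-\epsilon]}$ avoids all four punctures — the removed point $\overline{0}$ is the centroid of the triangle $x_0x_1x_2$ (as $x_0+x_1+x_2=0$) and does not lie on its edges — so the segment, together with arcs of the two $\epsilon$-circles, bounds a disc in $X_q$ along which the chain can be swept into the $1$-skeleton without winding around the handle loops $l_{\omega_1},l_{\omega_2}$. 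Making this precise, namely tracking the basepoints of the circles $\overline{S_\epsilon(x_i)}$ and the analytic continuations $\varsigma_l$ through the homotopy, is the step I expect to be the main obstacle, and it is exactly what the figure preceding the theorem is designed to encode.

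Granting the identification, both assertions follow at once. The classes $r_*\Xi_{(01),q},\,r_*\Xi_{(12),q},\,r_*\Xi_{\omega_1,q},\,r_*\Xi_{\omega_2,q}$ are, up to nonzero scalars, the basis $\{\tfrac{1}{c-1}(l_0-l_1),\,\tfrac{1}{c-1}(l_1-l_2),\,l_{\omega_1},\,l_{\omega_2}\}$ of $\ker\partial$, so the five chains generate $H_1(X_q,\mathcal{L}_{X_q})$, which proves (1). For (2), the sum maps to
\begin{align*}
\tfrac{1}{c-1}\big[\big((l_0-l_1)+(l_1-l_2)+(l_2-l_0)\big)\otimes\varsigma\big]=0,
\end{align*}
and since $r_*$ is an isomorphism we conclude $\Xi_{(01),q}+\Xi_{(12),q}+\Xi_{(20),q}=0$ in $H_1(X_q,\mathcal{L}_{X_q})$.
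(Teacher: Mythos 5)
Your proposal is correct, and it diverges from the paper's own proof at the decisive step, so a comparison is worthwhile. Both arguments rest on the same two pillars: the bouquet model of Remark \ref{rem3.2}, and the identification of $\Xi_{(ij),q}$ with $\frac{1}{c-1}\bigl(l_i\otimes\varsigma_{l_i}-l_j\otimes\varsigma_{l_j}\bigr)$ in homology --- a step the paper simply asserts (``is a cycle homologous to'') and which you, more candidly, flag as the main point requiring care; your treatment of it is at least as detailed as the paper's, so this is not a gap relative to the source. Where you genuinely differ is in how generation is established. The paper obtains $\dim H_1=4$ from the Euler characteristic and then proves independence by a \emph{forward reference} to the intersection form of Theorem \ref{intersection_form}, computed only in the next section: the nonvanishing of the cofactor $\frac{c^2+c+1}{(c-1)^2}$ (this is where $\alpha\notin\frac13\Bbb{Z}$ enters) shows that a suitable quadruple of cycles pairs nondegenerately with its duals. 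You instead write down the twisted cellular complex $0\to\Bbb{C}^5\to\Bbb{C}\to 0$ of the bouquet, note $\partial(l_i\otimes\varsigma)=(c-1)p$ and $\partial(l_{\omega_j}\otimes\varsigma)=0$, and read off the explicit basis $\{l_0-l_1,\,l_1-l_2,\,l_{\omega_1},\,l_{\omega_2}\}$ of $\ker\partial=H_1$. Your route buys three things: it is self-contained (no dependence on {\S}\ref{sec_intersection_form}, whereas the paper's Theorem \ref{thm_twisted_cycles} cites a result proved after it); it needs only $c\neq1$ rather than $c^2+c+1\neq0$; and it silently repairs an imprecision in the paper, whose claim that \emph{any} four of the five cycles are independent cannot be literally true (the three segment cycles sum to zero, so any quadruple containing all three is dependent) --- your basis makes the correct selection automatic. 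What the paper's route buys in return is economy within the paper as a whole: the intersection matrix must be computed anyway for the Picard--Lefschetz arguments of {\S}\ref{sec_connection_matrix}, so using it here costs nothing extra, and it exhibits independence through a pairing that is intrinsic rather than through a choice of CW model.
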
   
\begin{proof} 
The remark {\ref{rem3.2}} tells us two facts. 
One is that  
the elements of $H _1 ( X_q , \mathcal{L} _{X_q} )$ 
are represented by 
linear combinations of 
 $l_0 \otimes \varsigma _{l_0}$, 
$l_1 \otimes \varsigma _{l_1}$, 
$l_2 \otimes \varsigma _{l_2}$, 
$l_{\omega _1} \otimes \varsigma _{l_{\omega _1}}$,  
$l_{\omega _2} \otimes \varsigma _{l_{\omega _2}}$. 
The other is that 
the topological euler number 
$\chi ( X_q )$ equals $4$. 
This implies that the dimension of 
$H _1 ( X_q , \mathcal{L} _{X_q} )$ equals $4$. 
We shall pick four linearly independent  elements.     
The chain $\frac{1}{c-1} l_i \otimes \varsigma _{l_i}
-\frac{1}{c-1} l_j \otimes \varsigma _{l_j}$ is a cycle homologous to 
$\Xi _{(ij) ,q}$.  
Immediately, we have the linear relation stated in this theorem. 
Then we obtain five elements of $H _1 ( X_q , \mathcal{L} _{X_q} )$ subject to a single linear relation.  
The intersection form $(Theorem  {\ref{intersection_form}})$ 
in the next section 
tells us that any four  of the five elements   
$\Xi _{(01) ,q}$, $\Xi _{(12) ,q}$, $\Xi _{(20) ,q}$, $\Xi _{\omega _1 ,q}$, 
$\Xi _{\omega _2 ,q}$ 
are linearly independent. 
\end{proof}

\section{The intersection form} \label{sec_intersection_form} 
The intersection form on twisted cycles is a bilinear form  between 
$H _1 \left( X _q , \mathcal{L} _{X_q} \right)$ and 
$H_1 \left( X_q , \mathcal{L} _{X_q} ^{\vee} \right)$, 
where $\mathcal{L} _{X_q} ^{\vee}$ is the local system dual to 
$\mathcal{L} _{X_q}$; 
the local sections of $\mathcal{L} _{X_q} ^{\vee}$ are generated 
by $g ^{-\alpha}$ instead of $g ^{\alpha}$.   
To describe the intersection form concretely, 
we shall take the generators 
$\Xi _{(01) ,q} ^{\vee}$, $\Xi _{(12) ,q} ^{\vee}$, 
$\Xi _{(20) ,q} ^{\vee}$, $\Xi _{\omega _1 ,q} ^{\vee}$ and  
$\Xi _{\omega _2 ,q} ^{\vee}$ 
of 
$H_1 \left( X_q , \mathcal{L} _{X_q} ^{\vee} \right)$ 
which are given by replacing $\varsigma$ (resp. $c$) by 
$\frac{1}{\varsigma}$ (resp. $\frac{1}{c}$) 
in the definitions of 
$\Xi _{(01) ,q}$, $\Xi _{(12) ,q}$, 
$\Xi _{(20) ,q}$, $\Xi _{\omega _1 ,q}$ and  
$\Xi _{\omega _2 ,q}$ 
respectively: 
\begin{align*} 
\varsigma \Longleftrightarrow \frac{1}{\varsigma} , \quad 
c \Longleftrightarrow \frac{1}{c} 
\quad ; \quad 
\Xi = \sum _{\gamma} a_{\gamma} ( c ) \gamma \otimes \varsigma _{\gamma} 
\Longleftrightarrow 
\Xi ^{\vee} = \sum _{\gamma} a_{\gamma} \left( \frac{1}{c} \right) 
\gamma \otimes \frac{1}{\varsigma _{\gamma}} , 
\end{align*} 
where $\gamma \otimes \varsigma _{\gamma}$ is a twisted singular 
$1$-simplex and $a _{\gamma} (X) \in \Bbb{C} (X)$. 
 
We define the ordering on the index set 
$J := \{ (01) , (12) , (20) , \omega _1 , \omega _2 \}$ 
of  the generators of $H_1$ :  
\begin{align*} 
 (01) \prec (12) \prec (20) \prec \omega _1 \prec \omega _2 .  
\end{align*} 
Now we have 
\begin{thm}[intersection form] \label{intersection_form}
We assume $\arg x_0 < \arg x_1 < \arg x_2$. 
Then the intersection form between 
$H _1 \left( X _q , \mathcal{L} _{X_q} \right)$ and 
$H_1 \left( X_q , \mathcal{L} _{X_q} ^{\vee} \right)$ 
is given by the matrix 
\begin{align*} 
\left[ \langle \Xi _{\mu ,q} , \Xi ^{\vee} _{\nu ,q} \rangle \right] 
_{\mu , \nu \in J} 
= 
\begin{bmatrix} 
-\frac{c+1}{c-1}  & \frac{1}{c-1} & \frac{c}{c-1} & 0 & 0 \\ 
\frac{c}{c-1} & -\frac{c+1}{c-1} & \frac{1}{c-1} & 0 & 0 \\ 
\frac{1}{c-1} & \frac{c}{c-1} &-\frac{c+1}{c-1}  & 0 & 0 \\ 
0 & 0 & 0 & 0 & 1 \\ 
0 & 0 & 0 & -1 & 0
\end{bmatrix}  . 
\end{align*} 
In particular, 
the (2,2)-cofactor of this matrix is equal to 
$\frac{c^2 + c +1}{(c-1)^2}$. 
\end{thm}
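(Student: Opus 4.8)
The plan is to read off each entry $\langle \Xi_{\mu,q},\Xi^\vee_{\nu,q}\rangle$ as an intersection number of twisted cycles, evaluated as a finite sum over the transverse crossings of chosen representatives, where each crossing point $p$ contributes $\pm$ the value at $p$ of the canonical pairing $\mathcal{L}_{X_q}\otimes\mathcal{L}^\vee_{X_q}\to\Bbb{C}$ of the two loaded sections, namely $\varsigma$ continued along the first chain and $\tfrac{1}{\varsigma}$ continued along the second. Because the $\Xi_{(ij),q}$ and $\Xi^\vee_{(ij),q}$ are \emph{regularizations}, every representative stays away from the four punctures, so all crossings lie in $\INT{X_q}$, the sum is finite, and the product $\varsigma\cdot\tfrac{1}{\varsigma}$ continued to $p$ along the two chains equals $c^{k}$ for an integer $k$ recording the relative monodromy. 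The matrix then splits into three kinds of entries: pairings among the three segment cycles (the top-left $3\times3$ block), pairings among the two loop cycles (the bottom-right $2\times2$ block), and the mixed pairings.

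First I would treat the $3\times3$ block. The diagonal entries are self-intersections of a regularized segment joining two branch points, at each of which the local monodromy is $c$ by Remark \ref{rem3.2}. Perturbing one copy of $\Xi_{(ij),q}$ transversally, the interior segments cease to meet and the only contributions arise near the two endpoints, where the tip of the perturbed segment crosses the regularizing circle $\overline{S_\epsilon(x_i)}$ (resp. $\overline{S_\epsilon(x_j)}$); tracking the branch of $\varsigma$ around these circles and weighting the signs yields the value $-\frac{c+1}{c-1}$. For an off-diagonal entry $\langle\Xi_{(ij),q},\Xi^\vee_{(jk),q}\rangle$ the two segments $\overline{x_ix_j}$ and $\overline{x_jx_k}$ share only the vertex $x_j$, so all intersection is concentrated near $x_j$ where the two copies of $\overline{S_\epsilon(x_j)}$ overlap. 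Here the hypothesis $\arg x_0<\arg x_1<\arg x_2$ is essential: it fixes the angular order in which the segments leave each shared vertex, hence whether an arc of one circle crosses the segment of the other before or after passing the branch cut, and the two resulting configurations produce the branch weights $c^0$ and $c^1$, i.e. the entries $\frac{1}{c-1}$ and $\frac{c}{c-1}$ in exactly the circulant pattern shown. I expect this local bookkeeping — matching every crossing to the right power of $c$ and the right orientation sign under the fixed angular order — to be the main obstacle, and the step most in need of a careful picture at the shared vertices.

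Next I would compute the loop block. Since $\rho(l_{\omega_k})=1$ by Remark \ref{rem3.2}, the section $\varsigma$ is single-valued along $l_{\omega_1},l_{\omega_2}$, so the twisted pairing degenerates to the ordinary topological intersection number of the two standard generators of $H_1$ of the torus, namely $\left[\begin{smallmatrix}0&1\\-1&0\end{smallmatrix}\right]$. For the mixed block I would argue that, inside $\INT{X_q}$, the loops $l_{\omega_1},l_{\omega_2}$ (the retraction circles of the underlying parallelogram) can be homotoped off the triangle of segments $\overline{x_ix_j}$, so every mixed entry vanishes. As a global consistency check, summing the three rows (resp. columns) of the segment block gives zero, in agreement with the relation $\Xi_{(01),q}+\Xi_{(12),q}+\Xi_{(20),q}=0$ of Theorem \ref{thm_twisted_cycles}.

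Finally, the ``in particular'' claim is immediate once the matrix is established. The $(2,2)$-cofactor is the determinant of the minor obtained by deleting the second row and column; since the $\omega$-rows and $\omega$-columns are zero off their block, this minor is block diagonal, so
\begin{equation*}
\det\begin{bmatrix} -\frac{c+1}{c-1} & \frac{c}{c-1} \\ \frac{1}{c-1} & -\frac{c+1}{c-1} \end{bmatrix}\cdot\det\begin{bmatrix} 0 & 1 \\ -1 & 0 \end{bmatrix}=\frac{(c+1)^2-c}{(c-1)^2}=\frac{c^2+c+1}{(c-1)^2},
\end{equation*}
which is the asserted value.
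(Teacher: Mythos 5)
Your proposal follows essentially the same route as the paper's proof: localize the twisted intersection pairing at crossings of deformed representatives near the endpoint circles (yielding the diagonal entries $-\frac{c+1}{c-1}$ from the two weighted crossings with branch ratios $c$ and $1$, and the circulant off-diagonal entries $\frac{1}{c-1}$, $\frac{c}{c-1}$ concentrated at the shared vertex and fixed by the angular-order hypothesis), use triviality of the monodromy along $l_{\omega_1}$, $l_{\omega_2}$ to reduce the loop block to the ordinary intersection matrix $\left[\begin{smallmatrix} 0 & 1 \\ -1 & 0 \end{smallmatrix}\right]$, and push the supports apart for the mixed block. Your added row-sum consistency check against the relation $\Xi_{(01),q}+\Xi_{(12),q}+\Xi_{(20),q}=0$ and the explicit block-diagonal cofactor computation are correct extras that the paper states without proof.
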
 
\begin{proof} 
The intersection number 
$\langle \Xi _{\mu ,q} , \Xi _{\nu ,q} ^{\vee} \rangle$ 
can be calculated by summing up the local intersection numbers 
at each of the intersection points of the supports of 
$\Xi _{\mu ,q}$ and of $\Xi _{\nu ,q} ^{\vee}$. 
The local intersection number 
$\langle \gamma _1 \otimes \varsigma _{\gamma _1} , 
\gamma _2 \otimes \frac{1}{\varsigma _{\gamma _2}} \rangle _p$ 
at the intersection point $p$ of  their supports 
is the product of the ordinary local intersection number 
$\langle \gamma _1 , \gamma _2 \rangle _p$ and 
$\frac{\varsigma _{\gamma _1} (p)}{\varsigma _{\gamma _2} (p)}$. \\ 
\underline{In the case of $\mu = \nu = (i j ) 
\in \{ (01) , (12) , (20) \}$:} \\ 
We change the representative of the homology class 
$\Xi _{\nu ,q} ^{\vee}$ 
in such a way that its support meets that of $\Xi _{\mu ,q}$ 
at two points $p_i$, $p_j$, 
which belong to the supports 
of 
$\overline{S _{\epsilon} (x_i)} \otimes \varsigma _{\overline{S _{\epsilon} (x_i)} }$ and 
of 
$\overline{S _{\epsilon} (x_j)} \otimes \varsigma _{\overline{S _{\epsilon} (x_j)} }$ respectively: 
\begin{eqnarray*} 
 \langle \Xi _{\mu ,q} , \Xi _{\nu ,q} ^{\vee} \rangle 
&=& \frac{1}{c-1} \times (-1) \times 
\frac{\varsigma _{\overline{S _{\epsilon} (x_i ) } } (p _i )}{\varsigma 
_{\overline{[ x_i +\epsilon , x_j -\epsilon ]}} (p _i )} 
 - \frac{1}{c-1} \times 1 \times  
\frac{\varsigma _{\overline{S _{\epsilon} (x_j ) } } (p _j )}{\varsigma 
_{\overline{[ x_i +\epsilon , x_j -\epsilon ]}} (p _j )} \\ 
&=& \frac{1}{c-1} \times (-1) \times c -\frac{1}{c-1} \times 1 \times 1
= -\frac{c+1}{c-1} . 
\end{eqnarray*} 
\begin{center}
\begin{minipage}{.5\textwidth} 
\unitlength 0.1in
\begin{picture}( 26.2800, 17.1500)(  1.7000,-18.9500)
\put(17.3500,-6.8500){\makebox(0,0)[lt]{$0$}}%
%
\special{pn 13}%
\special{ar 516 1436 50 50  0.0000000 6.2831853}%
%
\special{pn 13}%
\special{ar 1716 636 50 50  0.0000000 6.2831853}%
%
\special{pn 20}%
\special{pa 780 1330}%
\special{pa 2250 736}%
\special{fp}%
%
\special{pn 20}%
\special{pa 1776 930}%
\special{pa 1672 938}%
\special{fp}%
\special{pa 1776 930}%
\special{pa 1688 986}%
\special{fp}%
%
\special{pn 20}%
\special{pa 2516 916}%
\special{pa 2416 890}%
\special{fp}%
\special{pa 2516 916}%
\special{pa 2416 940}%
\special{fp}%
%
\special{pn 20}%
\special{pa 1716 636}%
\special{pa 2516 636}%
\special{da 0.070}%
%
\special{pn 20}%
\special{pa 1716 636}%
\special{pa 516 1436}%
\special{da 0.070}%
\put(12.2000,-11.8000){\makebox(0,0)[lt]{$[ x_i +\epsilon , x_j -\epsilon ]$}}%
\put(24.7000,-7.8500){\makebox(0,0)[lt]{$x_j$}}%
\put(6.2000,-12.9500){\makebox(0,0)[rb]{$x_i$}}%
%
\special{pn 13}%
\special{ar 2516 636 50 50  0.0000000 6.2831853}%
%
\special{pn 20}%
\special{pa 516 1716}%
\special{pa 416 1690}%
\special{fp}%
\special{pa 516 1716}%
\special{pa 416 1740}%
\special{fp}%
%
\special{pn 20}%
\special{ar 516 1436 284 284  6.1180366 6.2831853}%
\special{ar 516 1436 284 284  0.0000000 5.8934767}%
%
\special{pn 20}%
\special{ar 2516 636 284 284  2.9648838 6.2831853}%
\special{ar 2516 636 284 284  0.0000000 2.7672590}%
%
\special{pn 20}%
\special{ar 2516 636 142 142  2.9648838 6.2831853}%
\special{ar 2516 636 142 142  0.0000000 2.7672590}%
%
\special{pn 20}%
\special{ar 516 1436 142 142  6.1180366 6.2831853}%
\special{ar 516 1436 142 142  0.0000000 5.8904246}%
%
\special{pn 20}%
\special{pa 646 1380}%
\special{pa 680 1402}%
\special{pa 716 1424}%
\special{pa 750 1444}%
\special{pa 784 1466}%
\special{pa 820 1486}%
\special{pa 854 1508}%
\special{pa 888 1528}%
\special{pa 922 1548}%
\special{pa 958 1568}%
\special{pa 992 1588}%
\special{pa 1026 1608}%
\special{pa 1060 1626}%
\special{pa 1092 1644}%
\special{pa 1126 1662}%
\special{pa 1160 1680}%
\special{pa 1192 1698}%
\special{pa 1224 1714}%
\special{pa 1258 1730}%
\special{pa 1290 1746}%
\special{pa 1322 1760}%
\special{pa 1352 1774}%
\special{pa 1384 1786}%
\special{pa 1414 1798}%
\special{pa 1446 1810}%
\special{pa 1476 1822}%
\special{pa 1504 1830}%
\special{pa 1534 1840}%
\special{pa 1562 1848}%
\special{pa 1592 1854}%
\special{pa 1620 1860}%
\special{pa 1646 1866}%
\special{pa 1674 1868}%
\special{pa 1700 1872}%
\special{pa 1726 1872}%
\special{pa 1752 1872}%
\special{pa 1776 1872}%
\special{pa 1800 1870}%
\special{pa 1824 1866}%
\special{pa 1846 1860}%
\special{pa 1870 1854}%
\special{pa 1892 1846}%
\special{pa 1912 1838}%
\special{pa 1932 1826}%
\special{pa 1952 1814}%
\special{pa 1972 1800}%
\special{pa 1990 1786}%
\special{pa 2008 1768}%
\special{pa 2026 1752}%
\special{pa 2042 1732}%
\special{pa 2058 1712}%
\special{pa 2074 1690}%
\special{pa 2090 1668}%
\special{pa 2104 1644}%
\special{pa 2118 1620}%
\special{pa 2132 1594}%
\special{pa 2146 1566}%
\special{pa 2158 1538}%
\special{pa 2170 1510}%
\special{pa 2182 1480}%
\special{pa 2194 1450}%
\special{pa 2206 1418}%
\special{pa 2216 1386}%
\special{pa 2226 1354}%
\special{pa 2236 1320}%
\special{pa 2246 1286}%
\special{pa 2256 1252}%
\special{pa 2266 1216}%
\special{pa 2276 1180}%
\special{pa 2284 1144}%
\special{pa 2294 1106}%
\special{pa 2302 1070}%
\special{pa 2310 1032}%
\special{pa 2318 994}%
\special{pa 2326 956}%
\special{pa 2334 918}%
\special{pa 2342 878}%
\special{pa 2350 840}%
\special{pa 2358 800}%
\special{pa 2366 762}%
\special{pa 2374 722}%
\special{pa 2380 686}%
\special{sp}%
%
\special{pn 20}%
\special{pa 1816 1870}%
\special{pa 1716 1846}%
\special{fp}%
\special{pa 1816 1870}%
\special{pa 1716 1896}%
\special{fp}%
%
\special{pn 20}%
\special{sh 0.300}%
\special{ar 800 1476 40 40  0.0000000 6.2831853}%
%
\special{pn 20}%
\special{sh 0.300}%
\special{ar 2346 870 40 40  0.0000000 6.2831853}%
\put(8.4500,-14.7000){\makebox(0,0)[lb]{$p_i$}}%
\put(22.8500,-8.7000){\makebox(0,0)[rt]{$p_j$}}%
\put(3.8500,-11.2500){\makebox(0,0)[lb]{$S _{\epsilon} (x_i)$}}%
\put(26.8500,-3.5000){\makebox(0,0)[lb]{$S_{\epsilon} (x_j)$}}%
\put(22.5500,-14.1500){\makebox(0,0)[lt]{$\Xi _{(i j) ,q} ^{\vee}$}}%
\end{picture}%
 \end{minipage} 
\end{center}  
\underline{In the case of $\mu \not= \nu$, 
$\mu , \nu \in \{ (01) , (12) , (20) \}$:} \\ 
We change the representative of the homology class 
$\Xi _{(jk) ,q} ^{\vee}$ 
in such a way that its support meets that of $\Xi _{(ij) ,q}$ 
at a point $p_j$ on the support of 
$\overline{S _{\epsilon} (x_j)} \otimes \varsigma _{\overline{S _{\epsilon} (x_j)} }$: 
\begin{eqnarray*} 
\langle \Xi _{(01) ,q} , \Xi _{(12) ,q} ^{\vee} \rangle 
= \langle \Xi _{(12) ,q} , \Xi _{(20) ,q} ^{\vee} \rangle 
= \langle \Xi _{(20) ,q} , \Xi _{(01) ,q} ^{\vee} \rangle 
=& -\frac{1}{c-1} \times (-1) \times 
\frac{\varsigma _{\overline{S _{\epsilon} (x_j ) } } (p _j )}{\varsigma _{\overline{[ x_i +\epsilon , x_j -\epsilon ]}} (p _j )} 
&= \frac{1}{c-1} , \\ 
\langle \Xi _{(12) ,q} , \Xi _{(01) ,q} ^{\vee} \rangle 
= \langle \Xi _{(20) ,q} , \Xi _{(12) ,q} ^{\vee} \rangle 
= \langle \Xi _{(01) ,q} , \Xi _{(20) ,q} ^{\vee} \rangle 
=& \frac{1}{c-1} \times 1 \times 
\frac{\varsigma _{\overline{S _{\epsilon} (x_j ) } } (p _j )}{\varsigma _{\overline{[ x_i +\epsilon , x_j -\epsilon ]}} (p _j )} 
&= \frac{c}{c-1} . 
\end{eqnarray*} 
\begin{center} 
\begin{minipage}{.5\textwidth} 
\unitlength 0.1in
\begin{picture}( 25.6600, 22.4000)(  6.3400,-28.5500)
\put(21.3700,-18.0000){\makebox(0,0)[lt]{$0$}}%
%
\special{pn 13}%
\special{ar 2518 950 50 50  0.0000000 6.2831853}%
%
\special{pn 13}%
\special{ar 918 2550 50 50  0.0000000 6.2831853}%
%
\special{pn 13}%
\special{ar 2118 1750 50 50  0.0000000 6.2831853}%
%
\special{pn 13}%
\special{ar 2918 1750 50 50  0.0000000 6.2831853}%
%
\special{pn 20}%
\special{pa 1182 2446}%
\special{pa 2652 1850}%
\special{fp}%
%
\special{pn 20}%
\special{pa 2788 1496}%
\special{pa 2642 1200}%
\special{fp}%
%
\special{pn 20}%
\special{pa 2178 2046}%
\special{pa 2074 2054}%
\special{fp}%
\special{pa 2178 2046}%
\special{pa 2090 2100}%
\special{fp}%
%
\special{pn 20}%
\special{pa 2688 1296}%
\special{pa 2710 1396}%
\special{fp}%
\special{pa 2688 1296}%
\special{pa 2754 1374}%
\special{fp}%
%
\special{pn 20}%
\special{pa 2918 2030}%
\special{pa 2818 2006}%
\special{fp}%
\special{pa 2918 2030}%
\special{pa 2818 2056}%
\special{fp}%
%
\special{pn 20}%
\special{pa 2118 1750}%
\special{pa 2518 950}%
\special{da 0.070}%
%
\special{pn 20}%
\special{pa 2118 1750}%
\special{pa 2918 1750}%
\special{da 0.070}%
%
\special{pn 20}%
\special{pa 2118 1750}%
\special{pa 918 2550}%
\special{da 0.070}%
%
\special{pn 20}%
\special{pa 918 2830}%
\special{pa 818 2806}%
\special{fp}%
\special{pa 918 2830}%
\special{pa 818 2856}%
\special{fp}%
\put(27.3200,-13.1500){\makebox(0,0)[lb]{$\Xi _{(jk) ,q} ^{\vee}$}}%
\put(23.9700,-7.8500){\makebox(0,0)[lb]{$x_k$}}%
\put(28.4000,-19.1000){\makebox(0,0)[lt]{$x_j$}}%
\put(8.2200,-26.1000){\makebox(0,0)[lt]{$x_i$}}%
%
\special{pn 20}%
\special{ar 918 2550 284 284  6.1033318 6.2831853}%
\special{ar 918 2550 284 284  0.0000000 5.8934767}%
%
\special{pn 20}%
\special{ar 2918 1750 284 284  2.9648838 6.2831853}%
\special{ar 2918 1750 284 284  0.0000000 2.7672590}%
%
\special{pn 20}%
\special{ar 2518 950 142 142  1.3734008 6.2831853}%
\special{ar 2518 950 142 142  0.0000000 1.0912770}%
%
\special{pn 20}%
\special{ar 2918 1750 146 146  4.5975124 6.2831853}%
\special{ar 2918 1750 146 146  0.0000000 4.2487414}%
%
\special{pn 20}%
\special{pa 2852 1620}%
\special{pa 2582 1076}%
\special{fp}%
%
\special{pn 20}%
\special{sh 0.300}%
\special{ar 2788 1500 40 40  0.0000000 6.2831853}%
\put(31.3200,-20.0500){\makebox(0,0)[lt]{$S _{\epsilon} (x_j)$}}%
\put(27.1200,-15.2000){\makebox(0,0)[rb]{$p_j$}}%
\end{picture}%
 \end{minipage} 
\end{center} 
\underline{In the case of $\mu , \nu \in \{\omega _1 , \omega _2 \}$:} \\ 
The ordinary intersection form 
$\langle \bullet , \bullet \rangle 
: H_1 ( X_q , \Bbb{C} ) \times H_1 ( X_q , \Bbb{C} ) 
\longrightarrow \Bbb{C}$ 
is given by 
$\left[ \langle l _{\omega _i } , l _{\omega _j } \rangle \right] _{i,j} = 
\begin{bmatrix} 0 & 1 \\ -1 & 0 \end{bmatrix}$, 
and 
$\frac{\varsigma _{l_{\omega _i}}(p)}{\varsigma _{l_{\omega _j}} (p)} 
=1$ at the intersection point $p$ of
 the supports of $l_{\omega _i}$ and of $l_{\omega _j}$. 
Hence, 
\begin{align*} 
\left[ \langle \Xi _{\omega _i ,q} , \Xi _{\omega _j ,q } ^{\vee} \rangle \right] _{ij}
= \begin{bmatrix} 0 & 1 \\ -1 & 0 \end{bmatrix} . 
\end{align*}  
\underline{In other cases:} \\ 
We can change the representative of the homology class 
$\Xi _{\nu ,q} ^{\vee}$ in such a way that 
its support  does not meet that of $\Xi _{\mu ,q}$. 
\end{proof}


\section{The connection matrix} \label{sec_connection_matrix} 
Put 
$\overline{S} := \left\{ \left. 
q= (\overline{x_0} , \overline{x_1} , \overline{x_2} ) 
\ \right| \ x_0 + x_1 + x_2 = 0 \right\} 
\subset C \times C \times C$, 
$D^{ij} := \left\{ \left. q \in \overline{S} \ \right| \ x_i -x_j \in \Gamma \right\} $ and 
$ D ^i _{\infty} := \left\{ \left. q \in \overline{S} \ \right| \ x_i \in \Gamma  
\right\}$. 
We denote the {\it configuration space} by 
$S := \overline{S} \setminus \left( \bigcup _{i \not= j} D ^{ij} \cup 
\bigcup _i D ^i _{\infty} \right)$ 
and the point $\left( \overline{\frac{\omega _i}{2}} , 
\overline{\frac{\omega _j}{2} } , \overline{\frac{\omega _k}{2}}  \right)$ 
on $S$ by $q_{(ijk)}$, 
where $\omega _0 := - \left( \omega _1 + \omega _2 \right)$. 
Let $\mathcal{S}$ be the sheaf over $S$ of germs of functions defined by 
$\Bbb{C}$-linear combinations of $\left\{ F _{\mu} \right\} _{\mu}$; 
the stalk over $q_{(ijk)}$ is the vector space spanned by germs of 
$ \left\{ \int _{\Xi _{\mu , q  } } g^{\alpha} dt \right\} _{\mu \in \left\{ (01) , (20) , \omega _1 , \omega _2 \right\} }$ 
at $q=q_{(ijk)}$. 

In this chapter, we shall describe the linear isomorphisms among stalks of 
$\mathcal{S}$ over $q_{(012)}$, $q_{(210)}$ and $q_{(102)}$, 
of which the corresponding matrices  are called the {\it connection matrices}, 
induced by the analytic continuations of 
$F _{\mu} (q) := \int _{\Xi _{\mu ,q}} g^{\alpha} dt $ 
along four paths (given later) on $S$. 
These linear isomorphisms are identified with the isomorphisms 
among the homology groups 
$\left\{ H_1 (X_q , \mathcal{L} _{X_q} ) \right\} 
_{q \in \left\{ q_{(012)} , q_{(210)} , q_{(102)} \right\} }$,   
which are induced by pulling back the fibration $\bigcup X_q \longrightarrow S$ 
along the paths on $S$. 
Put $\mathcal{H} (q) := 
H_1 (X_q , \mathcal{L} _{X_q} ) $ 
for brevity. 

We consider  the paths in $S$, with $\overline{x_1}$ fixed, 
whose initial (resp. terminal) point is $q_{(012)}$ (resp. $q_{(210)}$). 
By their definitions, 
these paths are on the subset 
$\left\{ \left. q = \left( \overline{x_0} , \overline{x_1} ,\overline{x_2} \right) 
\in \overline{S} \ \right| \ x_1 = \frac{\omega _1}{2} \right\}$, which meets the singular loci at six points ; 
it meets $D^{02}$ at four points 
$q _{(02)} ^{1,0}$, 
 $q _{(02)} ^{-1,0}$, 
$q _{(02)} ^{1,2}$, 
$q _{(02)} ^{-1,2}$,  
meets both $D^{01}$ and $D^{2} _{\infty}$ at the same point $q _{(02)} ^{0,2}$, 
and 
 meets both $D^{12}$ and $D^{0} _{\infty}$ at the same point $q _{(02)} ^{2,2}$, 
where 
\begin{align*} 
q _{(02)} ^{m_1 ,m_2} 
:= 
\left( \overline{\frac{\omega _0}{2} +
\frac{m_1 \omega _1 + m_2 \omega _2}{4} } , 
\overline{\frac{\omega _1}{2} } , 
\overline{\frac{\omega _2}{2} -
\frac{m_1 \omega _1 + m_2 \omega _2}{4} } 
\right) . 
\end{align*} 
We define four paths corresponding to the intersection points 
$q _{(02)} ^{1,0}$, 
 $q _{(02)} ^{-1,0}$, 
$q _{(02)} ^{1,2}$, 
$q _{(02)} ^{-1,2}$ 
with $D^{02}$ : 
The path  $\gamma _{(02)} ^{m_1 , m_2}$ 
is defined by slightly deforming a path from 
$q_{(012)}$ to $q_{(210)}$ via $q_{(02)} ^{m_1 , m_2}$ 
in such a way that $\gamma _{(02)} ^{m_1 , m_2}$ avoids $q _{(02)} ^{m_1 , m_2}$ : 
\begin{align*} 
\gamma _{(02)} ^{m_1 ,m_2} (s)  
:= 
\left( \overline{\frac{\omega _0}{2} +
\frac{m_1 \omega _1 + m_2 \omega _2}{2} s } , 
\overline{\frac{\omega _1}{2} } , 
\overline{\frac{\omega _2}{2} -
\frac{m_1 \omega _1 + m_2 \omega _2}{2} s } 
\right) , \quad 
0 \leq s \leq \frac{1}{2} - \varepsilon , \  
\frac{1}{2} +\varepsilon \leq s \leq 1 , 
\end{align*} 
where $\varepsilon$ is a small positive number. 
In a similar fashion, 
we define  the paths $\gamma _{(01)} ^{0 ,1 }$, 
$\gamma _{(01)} ^{0 ,-1}$, 
$\gamma _{(01)} ^{2 ,1}$, 
$\gamma _{(01)} ^{2 ,-1}$, 
with $\overline{x_2}$ fixed, 
whose initial (resp. terminal) point is $q _{(012)}$ (resp. $q_{(102)}$) : 
\begin{align*} 
\gamma _{(01)} ^{m_1 ,m_2} (s)  
:= 
\left( \overline{\frac{\omega _0}{2} +
\frac{m_1 \omega _1 + m_2 \omega _2}{2} s } ,  
\overline{\frac{\omega _1}{2} -
\frac{m_1 \omega _1 + m_2 \omega _2}{2} s } , 
\overline{\frac{\omega _2}{2} } 
\right) , \quad 
0 \leq s \leq \frac{1}{2} - \varepsilon , \  
\frac{1}{2} +\varepsilon \leq s \leq 1 .  
\end{align*}  
Note that the path $\gamma _{(ij)} ^{m_1 ,m_2}$ 
can be realized as 
a composition of 
$\gamma _{(02)} ^{1 ,0}$, 
$\gamma _{(02)} ^{-1 ,0}$, 
$\gamma _{(01)} ^{0 ,1}$ 
and 
$\gamma _{(01)} ^{0 ,-1}$, 
so is a path,     
with $\overline{x_0}$ fixed, 
whose initial (resp. terminal) point is $q _{(012)}$ (resp. $q_{(021)}$).  
For example, 
$\gamma _{(02)} ^{\pm 1 ,2} 
= ( \gamma _{(01)} ^{0 ,1} ) ^{\mp 1} \circ ( \gamma _{(01)} ^{0 ,-1} ) ^{\mp 1} 
\circ ( \gamma _{(02)} ^{\pm 1 ,0} ) ^{\pm 1} \circ 
 ( \gamma _{(01)} ^{0 ,-1} ) ^{\pm 1} \circ ( \gamma _{(01)} ^{0 ,1} ) ^{\pm 1} 
$. 
(The following figure depicts the behavior of the representative 
$x_{0,(ij)} ^{m_1 ,m_2} (s)$ of the $\overline{x_0}$-component of 
$\gamma _{(ij)} ^{m_1 ,m_2} (s)$, 
where $q _{0 ,(ij)} ^{m_1 ,m_2}$ denotes 
the representative of the $\overline{x_0}$-component of 
$q _{(ij)} ^{m_1 ,m_2} $.) \\ 
\begin{minipage}{.5\textwidth} 
\input{path.tex}
\end{minipage} 
\\ 

For brevity, we denote by $\tau _{(ij)}$ 
the transposition (belonging to the symmetric group $\frak{S} _3$) ; 
$\tau _{(ij)} (i) =j$, 
$\tau _{(ij)} (j) =i$ and 
$\tau _{(ij)} (k) =k$ for $k \not= i ,j$. 

We calculate the linear isomorphism 
$\left( \gamma _{(ij)} ^{m_1 ,m_2} \right) _{\ast}$ 
between 
$\mathcal{H}  \left( {q_{(012)}} \right)$ 
and 
$\mathcal{H} \left( {q_{(\tau _{(ij)}(0) , \tau _{(ij)} (1) , \tau _{(ij)} (2)  )}} \right)$ 
by the twisted Picard-Lefschetz formula ({\cite{givental}}) : 
The difference between a twisted cycle $\Xi$ and the transformed one 
$\left( \gamma _{(ij)} ^{m_1 ,m_2} \right) _{\ast} (\Xi )$  
is equal to the product of 
 the {\it vanishing cycle}, 
which vanishes at the singular point 
 $q _{(ij)} ^{m_1 ,m_2}$,  
and the constant factor which is calculated from the intersection numbers 
with the vanishing cycle and its {\it local monodromy} transformation. 

Let $\Delta _{(ij)} ^{m_1 ,m_2} (q)$ be the cycle vanishing at the singular point 
$q _{(ij)} ^{m_1 ,m_2}$, 
as $q$ tends to $q_{(ij)} ^{m_1 ,m_2}$ 
along an extended path of 
$\gamma _{(ij)} ^{m_1 ,m_2}$ that touches $q_{(ij)} ^{m_1 ,m_2}$. 
We have the following : 
\begin{prop}[vanishing cycles]  \label{vanishing_cycle}
\begin{align*} 
\Delta _{(02)} ^{-1 ,0} ( q _{(012)} ) 
&= 
-\Xi _{(20) , q_{(012)}} -\Xi _{\omega _1 , q_{(012)}} -\Xi _{\omega _2 , q_{(012)}},
\\ 
\Delta _{(02)} ^{1 ,0} ( q _{(012)} ) 
&= 
-\Xi _{(20) , q_{(012)}} -\Xi _{\omega _2 , q_{(012)}}, \\ 
\Delta _{(01)} ^{0 ,-1} ( q _{(012)} ) 
&= 
\Xi _{(01) , q_{(012)}} -\Xi _{\omega _1 , q_{(012)}} -\Xi _{\omega _2 , q_{(012)}},
\\ 
\Delta _{(01)} ^{0 ,1} ( q _{(012)} ) 
&= 
\Xi _{(01) , q_{(012)}} -\Xi _{\omega _1 , q_{(012)}} .  
\end{align*} 
\end{prop}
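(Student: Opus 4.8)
The plan is to invoke the twisted Picard--Lefschetz theory (\cite{givental}) together with the geometry of the degeneration. Along each of the four paths exactly one coordinate of $q$ is frozen ($\overline{x_1}$ for the $(02)$-family, $\overline{x_2}$ for the $(01)$-family) and the puncture $\overline{0}$ never moves, so the only collision is that of the two moving punctures $\overline{x_0}$ and $\overline{x_2}$ (resp. $\overline{x_0}$ and $\overline{x_1}$) as $q$ runs to $q_{(ij)}^{m_1,m_2}$. Since both colliding punctures carry the same local monodromy $c$ of $\mathcal{L}_{X_q}$, the attached vanishing cycle is the twisted Lefschetz thimble for such a collision, namely the \emph{regularized loaded segment} joining the two colliding points --- a cycle of exactly the same shape as the generators $\Xi_{(01)}$, $\Xi_{(12)}$, $\Xi_{(20)}$. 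This already pins the vanishing cycle down up to sign and up to its homotopy class; the whole content of the proposition is the determination of that homotopy class, i.e. of how the collapsing segment winds around the torus, which controls the $\Xi_{\omega_1}$- and $\Xi_{\omega_2}$-contributions.

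To extract the winding I would transport the loaded segment back to the base point $q_{(012)}$ along $\gamma_{(ij)}^{m_1,m_2}$ and compare its lift in the universal cover $\Bbb{C}$ with the lift of the standard segment underlying $\Xi_{(20)}$ (resp. $\Xi_{(01)}$). Evaluating $x_0-x_2$ (resp. $x_0-x_1$) at the singular point $q_{(ij)}^{m_1,m_2}$ gives the lattice vector by which the two chosen lifts differ: it is $-\omega_2$ for $(m_1,m_2)=(1,0)$ and $-\omega_1-\omega_2$ for $(-1,0)$ in the $(02)$-family, and $-\omega_1$ for $(0,1)$ and $-\omega_1-\omega_2$ for $(0,-1)$ in the $(01)$-family. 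Reading the trajectory of $\overline{x_0}$ off the figure, the collapsing segment is homotopic to the standard $\Xi_{(20)}$- (resp. $\Xi_{(01)}$-) segment followed by loops that wind around $C$ by precisely this lattice vector. Because the $\omega$-loops have trivial monodromy, $\rho(l_{\omega_i})=1$, each unit of winding contributes one clean copy of $\Xi_{\omega_1}$ or $\Xi_{\omega_2}$, with no factor of $c$ and no regularization needed; the orientation of the shrinking segment (running from $\overline{x_0}$ to the colliding partner) fixes the overall sign, negative against $\Xi_{(20)}$ for the $(02)$-family and positive along $\Xi_{(01)}$ for the $(01)$-family. Assembling these gives the four stated identities.

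The main obstacle will be the sign and winding bookkeeping in this second step. Getting the coefficients of $\Xi_{\omega_1},\Xi_{\omega_2}$ to come out as exactly $0$ or $-1$ requires tracking, arc by arc, both the path of $\overline{x_0}$ on $C$ through the deformation and the branch of $\varsigma$ carried along it, and checking that the small regularizing circles at the two endpoints are transported consistently. I would organize the four computations in parallel and isolate the difference between the two members of each family: passing from $(1,0)$ to $(-1,0)$ adds one extra loop in the $\omega_1$-direction, and passing from $(0,1)$ to $(0,-1)$ adds one extra loop in the $\omega_2$-direction. This reduces the task to one base computation per family plus the single observation that an extra full turn around the torus contributes exactly $-\Xi_{\omega_1}$ (resp. $-\Xi_{\omega_2}$), which matches the lattice vectors listed above.
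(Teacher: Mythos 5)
Your proposal is correct and takes essentially the same route as the paper: the paper's proof likewise identifies each $\Delta_{(ij)}^{m_1,m_2}(q_{(012)})$ with the regularized collapsing segment transported back to the base configuration and decomposes it into $\Xi_{(01)}$, $\Xi_{(20)}$, $\Xi_{\omega_1}$, $\Xi_{\omega_2}$ by homotopy, with the winding data read off figures whose endpoint labels ($-\omega_2$, $\omega_0$, $-\omega_1$) are exactly the lattice vectors $x_0-x_2$, resp. $x_0-x_1$, that you compute at the singular points. Your algebraic extraction of that winding in the universal cover, together with the observation that trivial monodromy along the $\omega$-loops makes each unit of winding contribute a clean $\pm\Xi_{\omega_i}$, is just the explicit version of the paper's picture-reading, and all four of your resulting identities agree with the proposition.
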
 
\begin{proof} 
The vanishing cycle $\Delta _{(ij)} ^{m_1 ,m_2} ( q _{(012)})$ 
is a path on $X_{q_{(012)}}$ 
connecting two points $\overline{\frac{\omega _i}{2}}$, 
$\overline{\frac{\omega _j}{2}}$ depending on   
the path $\gamma _{(ij)} ^{m_1 , m_2}$.  
Replacing it by the linear combination of 
$\Xi _{(01) , q_{(012)}}$, $\Xi _{(20) , q_{(012)}}$, $\Xi _{\omega _1 , q_{(012)}}$ 
 and $\Xi _{\omega _2 , q_{(012)}}$, 
we obtain the assertion. 
(See the following figures,  
in which the dotted lines indicate the vanishing cycles.) \\  
\begin{minipage}{.5\textwidth} 
\unitlength 0.1in
\begin{picture}( 20.3500, 24.0000)(0,-24.0000) 
%
\special{pn 4}%
\special{pa 600 1200}%
\special{pa 2000 1200}%
\special{fp}%
%
\special{pn 4}%
\special{pa 2400 0}%
\special{pa 1200 2400}%
\special{fp}%
%
\special{pn 4}%
\special{pa 1600 1200}%
\special{pa 3000 1200}%
\special{fp}%
%
\special{pn 20}%
\special{pa 800 1800}%
\special{pa 2000 600}%
\special{fp}%
%
\special{pn 13}%
\special{ar 2200 400 50 50  0.0000000 6.2831853}%
%
\special{pn 13}%
\special{ar 600 2000 50 50  0.0000000 6.2831853}%
%
\special{pn 13}%
\special{ar 1800 1200 50 50  0.0000000 6.2831853}%
%
\special{pn 13}%
\special{ar 3000 400 50 50  0.0000000 6.2831853}%
%
\special{pn 13}%
\special{ar 2600 1200 50 50  0.0000000 6.2831853}%
%
\special{pn 13}%
\special{ar 1000 1200 50 50  0.0000000 6.2831853}%
%
\special{pn 13}%
\special{ar 1400 2000 50 50  0.0000000 6.2831853}%
%
\special{pn 13}%
\special{ar 2200 2000 50 50  0.0000000 6.2831853}%
%
\special{pn 13}%
\special{ar 1400 400 50 50  0.0000000 6.2831853}%
%
\special{pn 20}%
\special{pa 1200 1400}%
\special{pa 1112 1454}%
\special{fp}%
\special{pa 1200 1400}%
\special{pa 1148 1488}%
\special{fp}%
%
\special{pn 20}%
\special{pa 1800 1200}%
\special{pa 2200 400}%
\special{da 0.070}%
%
\special{pn 20}%
\special{pa 1800 1200}%
\special{pa 2600 1200}%
\special{da 0.070}%
%
\special{pn 20}%
\special{pa 1800 1200}%
\special{pa 600 2000}%
\special{da 0.070}%
%
\special{pn 20}%
\special{pa 1400 400}%
\special{pa 2000 0}%
\special{da 0.070}%
\special{pa 2000 0}%
\special{pa 2000 0}%
\special{da 0.070}%
%
\special{pn 20}%
\special{pa 3000 400}%
\special{pa 3600 0}%
\special{da 0.070}%
%
\special{pn 20}%
\special{pa 2200 2000}%
\special{pa 3100 1400}%
\special{da 0.070}%
%
\special{pn 20}%
\special{pa 1400 2000}%
\special{pa 1200 2400}%
\special{da 0.070}%
%
\special{pn 20}%
\special{pa 1000 1200}%
\special{pa 600 1200}%
\special{da 0.070}%
%
\special{pn 20}%
\special{pa 1700 400}%
\special{pa 1800 426}%
\special{fp}%
\special{pa 1700 400}%
\special{pa 1800 376}%
\special{fp}%
%
\special{pn 20}%
\special{pa 800 1600}%
\special{pa 868 1522}%
\special{fp}%
\special{pa 800 1600}%
\special{pa 822 1500}%
\special{fp}%
%
\special{pn 20}%
\special{ar 2200 400 100 100  0.0000000 6.2831853}%
%
\special{pn 20}%
\special{ar 3000 400 100 100  0.0000000 6.2831853}%
%
\special{pn 20}%
\special{pa 2900 400}%
\special{pa 2400 400}%
\special{fp}%
%
\special{pn 20}%
\special{pa 2000 400}%
\special{pa 1500 400}%
\special{fp}%
%
\special{pn 20}%
\special{ar 1400 400 102 102  0.0996687 2.0344439}%
%
\special{pn 20}%
\special{ar 1000 1200 100 100  5.1760366 6.2831853}%
\special{ar 1000 1200 100 100  0.0000000 2.0344439}%
%
\special{pn 20}%
\special{ar 600 2000 102 102  5.1760366 5.4977871}%
%
\special{pn 20}%
\special{pa 1356 490}%
\special{pa 1046 1110}%
\special{fp}%
%
\special{pn 20}%
\special{pa 956 1290}%
\special{pa 646 1910}%
\special{fp}%
%
\special{pn 20}%
\special{pa 670 1930}%
\special{pa 2130 470}%
\special{fp}%
%
\special{pn 20}%
\special{ar 2200 400 200 200  3.1415927 6.2831853}%
%
\special{pn 20}%
\special{pa 2300 410}%
\special{pa 2900 410}%
\special{dt 0.054}%
%
\special{pn 20}%
\special{pa 2500 410}%
\special{pa 2600 436}%
\special{fp}%
\special{pa 2500 410}%
\special{pa 2600 386}%
\special{fp}%
\put(31.0000,-5.0000){\makebox(0,0)[lt]{$\frac{\omega _0}{2}$}}%
\put(18.5000,-12.5000){\makebox(0,0)[lt]{$\omega _0$}}%
\put(30.7000,-8.5000){\makebox(0,0)[lt]{$\Delta _{(02)} ^{(-1 ,0)} (q_{(012)}) $}}%
\put(23.6000,-3.2000){\makebox(0,0)[lb]{$-\Xi _{\omega _1 ,q_{(012)}}$}}%
\put(11.6500,-8.0500){\makebox(0,0)[rb]{$-\Xi _{\omega _2 ,q_{(012)}}$}}%
\put(22.3000,-7.3000){\makebox(0,0)[lt]{$\Xi _{(20) ,q_{(012)}}$}}%
%
\special{pn 4}%
\special{pa 2220 810}%
\special{pa 1830 810}%
\special{fp}%
\special{sh 1}%
\special{pa 1830 810}%
\special{pa 1898 830}%
\special{pa 1884 810}%
\special{pa 1898 790}%
\special{pa 1830 810}%
\special{fp}%
%
\special{pn 4}%
\special{pa 3060 930}%
\special{pa 2610 480}%
\special{fp}%
\special{sh 1}%
\special{pa 2610 480}%
\special{pa 2644 542}%
\special{pa 2648 518}%
\special{pa 2672 514}%
\special{pa 2610 480}%
\special{fp}%
\end{picture}%
 \end{minipage} 
\hspace{-.1\textwidth}
\begin{minipage}{.5\textwidth} 
\unitlength 0.1in
\begin{picture}(25 ,24)(0,-28) 
%
\special{pn 4}%
\special{pa 600 1600}%
\special{pa 2000 1600}%
\special{fp}%
%
\special{pn 4}%
\special{pa 2400 400}%
\special{pa 1200 2800}%
\special{fp}%
%
\special{pn 4}%
\special{pa 1600 1600}%
\special{pa 3000 1600}%
\special{fp}%
%
\special{pn 20}%
\special{pa 800 2200}%
\special{pa 2000 1000}%
\special{fp}%
%
\special{pn 13}%
\special{ar 2200 800 50 50  0.0000000 6.2831853}%
%
\special{pn 13}%
\special{ar 600 2400 50 50  0.0000000 6.2831853}%
%
\special{pn 13}%
\special{ar 1800 1600 50 50  0.0000000 6.2831853}%
%
\special{pn 13}%
\special{ar 3000 800 50 50  0.0000000 6.2831853}%
%
\special{pn 13}%
\special{ar 2600 1600 50 50  0.0000000 6.2831853}%
%
\special{pn 13}%
\special{ar 1000 1600 50 50  0.0000000 6.2831853}%
%
\special{pn 13}%
\special{ar 1400 2400 50 50  0.0000000 6.2831853}%
%
\special{pn 13}%
\special{ar 2200 2400 50 50  0.0000000 6.2831853}%
%
\special{pn 13}%
\special{ar 1400 800 50 50  0.0000000 6.2831853}%
%
\special{pn 20}%
\special{pa 1200 1800}%
\special{pa 1112 1854}%
\special{fp}%
\special{pa 1200 1800}%
\special{pa 1148 1888}%
\special{fp}%
%
\special{pn 20}%
\special{pa 1800 1600}%
\special{pa 2200 800}%
\special{da 0.070}%
%
\special{pn 20}%
\special{pa 1800 1600}%
\special{pa 2600 1600}%
\special{da 0.070}%
%
\special{pn 20}%
\special{pa 1800 1600}%
\special{pa 600 2400}%
\special{da 0.070}%
%
\special{pn 20}%
\special{pa 1400 800}%
\special{pa 2000 400}%
\special{da 0.070}%
\special{pa 2000 400}%
\special{pa 2000 400}%
\special{da 0.070}%
%
\special{pn 20}%
\special{pa 3000 800}%
\special{pa 3600 400}%
\special{da 0.070}%
%
\special{pn 20}%
\special{pa 2200 2400}%
\special{pa 3100 1800}%
\special{da 0.070}%
%
\special{pn 20}%
\special{pa 1400 2400}%
\special{pa 1200 2800}%
\special{da 0.070}%
%
\special{pn 20}%
\special{pa 1000 1600}%
\special{pa 600 1600}%
\special{da 0.070}%
%
\special{pn 20}%
\special{pa 1800 800}%
\special{pa 1700 776}%
\special{fp}%
\special{pa 1800 800}%
\special{pa 1700 826}%
\special{fp}%
%
\special{pn 20}%
\special{pa 796 2000}%
\special{pa 864 1924}%
\special{fp}%
\special{pa 796 2000}%
\special{pa 820 1900}%
\special{fp}%
%
\special{pn 20}%
\special{ar 2200 800 100 100  0.0000000 6.2831853}%
%
\special{pn 20}%
\special{ar 1400 800 100 100  0.0000000 6.2831853}%
%
\special{pn 20}%
\special{pa 1356 890}%
\special{pa 1046 1510}%
\special{fp}%
%
\special{pn 20}%
\special{pa 646 2306}%
\special{pa 956 1690}%
\special{fp}%
%
\special{pn 20}%
\special{ar 1000 1600 100 100  5.1760366 6.2831853}%
\special{ar 1000 1600 100 100  0.0000000 2.0344439}%
%
\special{pn 20}%
\special{ar 600 2400 100 100  5.1760366 5.4977871}%
%
\special{pn 20}%
\special{pa 670 2330}%
\special{pa 2130 870}%
\special{fp}%
%
\special{pn 20}%
\special{pa 1500 800}%
\special{pa 2100 800}%
\special{dt 0.054}%
\put(13.0000,-7.0000){\makebox(0,0)[rb]{$\frac{\omega _0}{2}$}}%
\put(18.0000,-17.0000){\makebox(0,0)[lt]{$-\omega _2$}}%
\put(11.0000,-13.0000){\makebox(0,0)[rb]{$-\Xi _{\omega _2 ,q_{(012)}}$}}%
\put(21.0000,-10.0000){\makebox(0,0)[lt]{$\Delta _{(02)} ^{1,0} (q_{(012)})$}}%
\put(21.5000,-12.6000){\makebox(0,0)[lt]{$-\Xi _{(20) ,q_{(012)}}$}}%
%
\special{pn 4}%
\special{pa 2090 1100}%
\special{pa 1780 840}%
\special{fp}%
\special{sh 1}%
\special{pa 1780 840}%
\special{pa 1818 898}%
\special{pa 1822 874}%
\special{pa 1844 868}%
\special{pa 1780 840}%
\special{fp}%
%
\special{pn 4}%
\special{pa 2130 1350}%
\special{pa 1690 1350}%
\special{fp}%
\special{sh 1}%
\special{pa 1690 1350}%
\special{pa 1758 1370}%
\special{pa 1744 1350}%
\special{pa 1758 1330}%
\special{pa 1690 1350}%
\special{fp}%
\end{picture}%
 \end{minipage}  
\begin{minipage}{.5\textwidth} 
\unitlength 0.1in
\begin{picture}(25,24)(0,-24) 
%
\special{pn 4}%
\special{pa 650 1200}%
\special{pa 2050 1200}%
\special{fp}%
%
\special{pn 4}%
\special{pa 2450 0}%
\special{pa 1250 2400}%
\special{fp}%
%
\special{pn 4}%
\special{pa 1650 1200}%
\special{pa 3050 1200}%
\special{fp}%
%
\special{pn 13}%
\special{ar 2250 400 50 50  0.0000000 6.2831853}%
%
\special{pn 13}%
\special{ar 650 2000 50 50  0.0000000 6.2831853}%
%
\special{pn 13}%
\special{ar 1850 1200 50 50  0.0000000 6.2831853}%
%
\special{pn 13}%
\special{ar 3050 400 50 50  0.0000000 6.2831853}%
%
\special{pn 13}%
\special{ar 2650 1200 50 50  0.0000000 6.2831853}%
%
\special{pn 13}%
\special{ar 1050 1200 50 50  0.0000000 6.2831853}%
%
\special{pn 13}%
\special{ar 1450 2000 50 50  0.0000000 6.2831853}%
%
\special{pn 13}%
\special{ar 2250 2000 50 50  0.0000000 6.2831853}%
%
\special{pn 13}%
\special{ar 1450 400 50 50  0.0000000 6.2831853}%
%
\special{pn 20}%
\special{pa 2150 1400}%
\special{pa 2048 1408}%
\special{fp}%
\special{pa 2150 1400}%
\special{pa 2064 1456}%
\special{fp}%
%
\special{pn 20}%
\special{pa 1850 1200}%
\special{pa 2250 400}%
\special{da 0.070}%
%
\special{pn 20}%
\special{pa 1850 1200}%
\special{pa 2650 1200}%
\special{da 0.070}%
%
\special{pn 20}%
\special{pa 1850 1200}%
\special{pa 650 2000}%
\special{da 0.070}%
%
\special{pn 20}%
\special{pa 1450 400}%
\special{pa 2050 0}%
\special{da 0.070}%
\special{pa 2050 0}%
\special{pa 2050 0}%
\special{da 0.070}%
%
\special{pn 20}%
\special{pa 3050 400}%
\special{pa 3650 0}%
\special{da 0.070}%
%
\special{pn 20}%
\special{pa 2250 2000}%
\special{pa 3150 1400}%
\special{da 0.070}%
%
\special{pn 20}%
\special{pa 1450 2000}%
\special{pa 1250 2400}%
\special{da 0.070}%
%
\special{pn 20}%
\special{pa 1050 1200}%
\special{pa 650 1200}%
\special{da 0.070}%
%
\special{pn 20}%
\special{pa 1650 2000}%
\special{pa 1750 2026}%
\special{fp}%
\special{pa 1650 2000}%
\special{pa 1750 1976}%
\special{fp}%
%
\special{pn 20}%
\special{pa 2350 1800}%
\special{pa 2418 1722}%
\special{fp}%
\special{pa 2350 1800}%
\special{pa 2372 1700}%
\special{fp}%
%
\special{pn 20}%
\special{pa 750 2000}%
\special{pa 1350 2000}%
\special{fp}%
%
\special{pn 20}%
\special{pa 1550 2000}%
\special{pa 2150 2000}%
\special{fp}%
%
\special{pn 20}%
\special{ar 2650 1200 100 100  0.0000000 6.2831853}%
%
\special{pn 20}%
\special{ar 3050 400 100 100  0.0000000 6.2831853}%
%
\special{pn 20}%
\special{ar 1450 2000 100 100  3.1415927 6.2831853}%
%
\special{pn 20}%
\special{ar 650 2000 100 100  5.9301949 6.2831853}%
%
\special{pn 20}%
\special{ar 2250 2000 102 102  3.1415927 5.1760366}%
%
\special{pn 20}%
\special{ar 2650 1200 200 200  5.1760366 6.2831853}%
\special{ar 2650 1200 200 200  0.0000000 2.0344439}%
%
\special{pn 20}%
\special{pa 746 1966}%
\special{pa 2556 1236}%
\special{fp}%
%
\special{pn 20}%
\special{pa 2296 1910}%
\special{pa 2560 1380}%
\special{fp}%
%
\special{pn 20}%
\special{pa 2740 1020}%
\special{pa 3006 490}%
\special{fp}%
\put(18.0000,-11.5000){\makebox(0,0)[rb]{$\omega _0$}}%
\put(29.4500,-3.0500){\makebox(0,0)[rb]{$\frac{\omega _0}{2}$}}%
\put(28.4000,-12.5000){\makebox(0,0)[lt]{$-\Xi _{\omega _2 ,q_{(012)}}$}}%
\put(16.2000,-20.5000){\makebox(0,0)[lt]{$-\Xi _{\omega _1 ,q_{(012)}}$}}%
\put(17.2000,-15.9500){\makebox(0,0)[lt]{$\Xi _{(01) ,q_{(012)}}$}}%
%
\special{pn 20}%
\special{pa 2840 800}%
\special{pa 2908 722}%
\special{fp}%
\special{pa 2840 800}%
\special{pa 2862 700}%
\special{fp}%
\put(28.2000,-7.7000){\makebox(0,0)[rb]{$\Delta _{(01)} ^{0,-1} (q _{(012)}) $}}%
%
\special{pn 4}%
\special{pa 2440 770}%
\special{pa 2770 830}%
\special{fp}%
\special{sh 1}%
\special{pa 2770 830}%
\special{pa 2708 798}%
\special{pa 2718 820}%
\special{pa 2702 838}%
\special{pa 2770 830}%
\special{fp}%
%
\special{pn 20}%
\special{pa 2996 490}%
\special{pa 2686 1110}%
\special{dt 0.054}%
\end{picture}%
 \end{minipage} 
\begin{minipage}{.5\textwidth} 
\unitlength 0.1in
\begin{picture}( 30.5000, 24.0000)(  6.0000,-24.0000)
%
\special{pn 4}%
\special{pa 650 1200}%
\special{pa 2050 1200}%
\special{fp}%
%
\special{pn 4}%
\special{pa 2450 0}%
\special{pa 1250 2400}%
\special{fp}%
%
\special{pn 4}%
\special{pa 1650 1200}%
\special{pa 3050 1200}%
\special{fp}%
%
\special{pn 13}%
\special{ar 2250 400 50 50  0.0000000 6.2831853}%
%
\special{pn 13}%
\special{ar 650 2000 50 50  0.0000000 6.2831853}%
%
\special{pn 13}%
\special{ar 1850 1200 50 50  0.0000000 6.2831853}%
%
\special{pn 13}%
\special{ar 3050 400 50 50  0.0000000 6.2831853}%
%
\special{pn 13}%
\special{ar 2650 1200 50 50  0.0000000 6.2831853}%
%
\special{pn 13}%
\special{ar 1050 1200 50 50  0.0000000 6.2831853}%
%
\special{pn 13}%
\special{ar 1450 2000 50 50  0.0000000 6.2831853}%
%
\special{pn 13}%
\special{ar 2250 2000 50 50  0.0000000 6.2831853}%
%
\special{pn 13}%
\special{ar 1450 400 50 50  0.0000000 6.2831853}%
%
\special{pn 20}%
\special{pa 2150 1400}%
\special{pa 2048 1408}%
\special{fp}%
\special{pa 2150 1400}%
\special{pa 2064 1456}%
\special{fp}%
%
\special{pn 20}%
\special{pa 1850 1200}%
\special{pa 2250 400}%
\special{da 0.070}%
%
\special{pn 20}%
\special{pa 1850 1200}%
\special{pa 2650 1200}%
\special{da 0.070}%
%
\special{pn 20}%
\special{pa 1850 1200}%
\special{pa 650 2000}%
\special{da 0.070}%
%
\special{pn 20}%
\special{pa 1450 400}%
\special{pa 2050 0}%
\special{da 0.070}%
\special{pa 2050 0}%
\special{pa 2050 0}%
\special{da 0.070}%
%
\special{pn 20}%
\special{pa 3050 400}%
\special{pa 3650 0}%
\special{da 0.070}%
%
\special{pn 20}%
\special{pa 2250 2000}%
\special{pa 3150 1400}%
\special{da 0.070}%
%
\special{pn 20}%
\special{pa 1450 2000}%
\special{pa 1250 2400}%
\special{da 0.070}%
%
\special{pn 20}%
\special{pa 1050 1200}%
\special{pa 650 1200}%
\special{da 0.070}%
%
\special{pn 20}%
\special{pa 1650 2000}%
\special{pa 1750 2026}%
\special{fp}%
\special{pa 1650 2000}%
\special{pa 1750 1976}%
\special{fp}%
%
\special{pn 20}%
\special{pa 2550 1400}%
\special{pa 2484 1478}%
\special{fp}%
\special{pa 2550 1400}%
\special{pa 2528 1500}%
\special{fp}%
%
\special{pn 20}%
\special{pa 750 2000}%
\special{pa 1350 2000}%
\special{fp}%
%
\special{pn 20}%
\special{pa 1550 2000}%
\special{pa 2150 2000}%
\special{fp}%
%
\special{pn 20}%
\special{ar 2650 1200 100 100  0.0000000 6.2831853}%
%
\special{pn 20}%
\special{ar 1450 2000 100 100  3.1415927 6.2831853}%
%
\special{pn 20}%
\special{ar 650 2000 100 100  5.9301949 6.2831853}%
%
\special{pn 20}%
\special{pa 746 1966}%
\special{pa 2556 1236}%
\special{fp}%
\put(18.0000,-11.5000){\makebox(0,0)[rb]{$-\omega _1$}}%
\put(23.2000,-20.7000){\makebox(0,0)[lt]{$\frac{\omega _0}{2}$}}%
\put(16.2000,-20.5000){\makebox(0,0)[lt]{$-\Xi _{\omega _1 ,q_{(012)}}$}}%
\put(17.2000,-15.9500){\makebox(0,0)[lt]{$\Xi _{(01) ,q_{(012)}}$}}%
\put(30.2000,-11.7000){\makebox(0,0)[lb]{$\Delta _{(01)} ^{0,1} (q _{(012)}) $}}%
%
\special{pn 20}%
\special{ar 2250 2000 100 100  0.0000000 6.2831853}%
%
\special{pn 20}%
\special{pa 2606 1290}%
\special{pa 2296 1910}%
\special{dt 0.054}%
%
\special{pn 4}%
\special{pa 3020 1170}%
\special{pa 2570 1450}%
\special{fp}%
\special{sh 1}%
\special{pa 2570 1450}%
\special{pa 2638 1432}%
\special{pa 2616 1422}%
\special{pa 2616 1398}%
\special{pa 2570 1450}%
\special{fp}%
\end{picture}%
 \end{minipage} 
\end{proof} 
We have the common basis for 
$\mathcal{H} ( q_{(012)})$ and 
$\mathcal{H} \left( q_{( \tau _{ij} (0), \tau _{ij} (1) , \tau _{ij} (2) )} \right)$ : 
Put 
\begin{align*} 
\Xi _{(kl)} &:=  
  \frac{1}{c -1 } \overline{S _{\epsilon} \left( \frac{\omega _k}{2} \right)} 
\otimes 
\varsigma _{\overline{S _{\epsilon} \left( \frac{\omega _k}{2} \right)} } 
+ \overline{\left[ \frac{\omega _k}{2} +\epsilon , \frac{\omega _l}{2}
 -\epsilon \right]} 
\otimes 
\varsigma _{\overline{\left[ \frac{\omega _k}{2} +\epsilon , 
\frac{\omega _l}{2} -\epsilon \right]}} 
- \frac{1}{c -1 } \overline{S _{\epsilon} \left( \frac{\omega _l}{2} \right)} 
\otimes 
\varsigma _{\overline{S _{\epsilon} \left( \frac{\omega _l}{2} \right)} } \\ 
 \Xi _{\omega _k} &:=  
 l _{\omega _k} \otimes \varsigma _{l _{\omega _k}} , 
\end{align*} 
and we define the ordering on the index set 
$J ^{\prime}:= \{ (01) , (20) , \omega _1 , \omega _2 \}$ 
of  this basis :  
\begin{align*} 
 (01) \prec  (20) \prec \omega _1 \prec \omega _2 .  
\end{align*} 
Now we have the following : 
\begin{thm}[connection matrices]  \label{thm_connection_matrix} 
Let $M ( \gamma _{(ij)} ^{m_1 , m_2}) = 
\left[ m ( \gamma _{(ij)} ^{m_1 , m_2}) _{\mu ,\nu} \right] 
_{\mu , \nu \in J^{\prime}}$ 
be the $4 \times 4$-matrix representing the linear isomorphism 
$\left( \gamma _{(ij)} ^{m_1 , m_2} \right)_{\ast}$, that is,  
$\left( \gamma _{(ij)} ^{m_1 , m_2} \right)_{\ast} 
( \Xi _{\mu } ) = \sum _{\nu} 
m (\gamma_{(ij)} ^{m_1 , m_2} ) _{\mu , \nu } 
\Xi _{\mu } 
$. Then, 
\begin{align*} 
M (\gamma _{(02)} ^{-1,0})
 &=  
\begin{bmatrix} 
1 &  0 & 0   & 0    \\
c & -c &c-1 &-c+1  \\ 
c & -c-1&c    & -c+1  \\
c &-c-1&c-1 &-c+2
\end{bmatrix}  ,   
& M (\gamma _{(02)} ^{1,0}) 
=  &
\begin{bmatrix} 
1 &  0 & 0   & 0    \\
c & -c & c-1   &0  \\ 
0 & 0 & 1   & 0  \\
c &  -c-1 & c-1   & 1
\end{bmatrix} , \\
M (\gamma _{(01)} ^{0,-1}) 
&=  
\begin{bmatrix} 
-c &  1 & -c+1   & c-1    \\
0  & 1  & 0   & 0  \\ 
c+1 & -1& c  & -c+1  \\
c+1 &  -1 & c-1   & -c+2
\end{bmatrix} , 
& M (\gamma _{(01)} ^{0,1})
= &
\begin{bmatrix} 
-c &  1 & 0   & c-1    \\
0  & 1  & 0   & 0  \\ 
c+1  & -1  & 1  & -c+1  \\
0 &  0 & 0   & 1
\end{bmatrix} .    
\end{align*} 
\end{thm}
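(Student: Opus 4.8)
The plan is to realize each connection map $\left( \gamma _{(ij)} ^{m_1 ,m_2} \right) _{\ast}$ as the monodromy of the twisted homology local system $\left\{ \mathcal{H} (q) \right\}$ along a loop that encircles exactly one singular point $q _{(ij)} ^{m_1 ,m_2}$ of the base, at which two of the four branch points of $g^{\alpha}$ collide. Since the common basis $\left\{ \Xi _{(01)} , \Xi _{(20)} , \Xi _{\omega _1} , \Xi _{\omega _2} \right\}$ is defined directly in terms of the fixed half-periods $\frac{\omega _k}{2}$, it is meaningful simultaneously at $q_{(012)}$ and at the permuted configuration, so that $M ( \gamma _{(ij)} ^{m_1 ,m_2})$ is literally the matrix of $\left( \gamma _{(ij)} ^{m_1 ,m_2} \right) _{\ast}$ written in this single basis.

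The engine is the twisted Picard--Lefschetz formula (\cite{givental}), which I will use in the form
\[
\left( \gamma _{(ij)} ^{m_1 ,m_2} \right) _{\ast} ( \Xi ) = \Xi + (c-1) \, \langle \Xi , \Delta ^{\vee} \rangle \, \Delta ,
\]
where $\Delta = \Delta _{(ij)} ^{m_1 ,m_2} ( q_{(012)} )$ is the vanishing cycle furnished by Proposition \ref{vanishing_cycle}, $\Delta ^{\vee}$ is the same $\pm 1$-combination of the dual cycles $\Xi _{\mu} ^{\vee}$, and $\langle \, \cdot , \cdot \, \rangle$ is the intersection pairing computed in Theorem \ref{intersection_form}. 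The content is that the monodromy is a transvection along $\Delta$: a cycle $\Xi$ is altered only by a multiple of the vanishing cycle, the multiple being read off from how $\Xi$ meets $\Delta ^{\vee}$, with a single universal normalizing constant $c-1$.

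Once this formula is in hand the computation is mechanical. For each of the four paths I substitute the corresponding $\Delta$ from Proposition \ref{vanishing_cycle} --- all four are already expressed through $\Xi _{(01)} , \Xi _{(20)} , \Xi _{\omega _1} , \Xi _{\omega _2}$ --- and evaluate $\langle \Xi _{\nu} , \Delta ^{\vee} \rangle$ for $\nu \in \{ (01) , (20) , \omega _1 , \omega _2 \}$ using the $4 \times 4$ submatrix of Theorem \ref{intersection_form} indexed by rows and columns $(01) , (20) , \omega _1 , \omega _2$ (the relation $\Xi _{(01)} + \Xi _{(12)} + \Xi _{(20)} = 0$ of Theorem \ref{thm_twisted_cycles} guarantees that these four cycles form a basis). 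Plugging the resulting scalars back into the transvection formula and collecting coefficients yields, column by column, exactly the four asserted matrices. As an internal check one may verify that the self-intersection $\langle \Delta , \Delta ^{\vee} \rangle = - \frac{c+1}{c-1}$ is the same for all four vanishing cycles, and that the composition identities such as $\gamma _{(02)} ^{\pm 1 ,2} = ( \gamma _{(01)} ^{0 ,1} ) ^{\mp 1} \circ \cdots$ are respected by the matrices.

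The real work is justifying the twisted Picard--Lefschetz formula above with the precise constant $c-1$ and the correct signs. This demands a local analysis at $q _{(ij)} ^{m_1 ,m_2}$: as $q$ approaches the discriminant, the fibre degenerates to a configuration in which two punctures merge, the local model being $\Bbb{C}$ with two punctures each carrying local monodromy $c$, whose vanishing cycle is the regularized segment joining them. One must confirm that the local monodromy is the stated transvection, determine its eigenvalue on $\Delta$ (equivalently the normalizing constant $c-1$, given $\langle \Delta , \Delta ^{\vee} \rangle$), and pin down both the orientation of $\Delta$ and the side on which $\gamma _{(ij)} ^{m_1 ,m_2}$ passes $q _{(ij)} ^{m_1 ,m_2}$ --- precisely the data encoded by the labels $m_1 , m_2$ and the small deformation $\varepsilon$ in the definition of the paths. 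I expect this local and sign bookkeeping, rather than the subsequent linear algebra, to be the main obstacle.
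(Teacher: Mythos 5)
Your proposal is correct and takes essentially the same route as the paper: the paper's entire proof consists of substituting the vanishing cycles of Proposition \ref{vanishing_cycle} into Givental's twisted Picard--Lefschetz formula and computing with the intersection form of Theorem \ref{intersection_form}. The only cosmetic difference is that the paper writes the transvection coefficient intrinsically as $\frac{-c-1}{\langle \Delta , \Delta ^{\vee} \rangle}$, which coincides with your hard-coded constant $c-1$ exactly because of the self-intersection identity $\langle \Delta , \Delta ^{\vee} \rangle = -\frac{c+1}{c-1}$ that you verify as your internal check.
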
 
\begin{proof} 
Substituting the result of Proposition {\ref{vanishing_cycle}} 
for the twisted Picard-Lefschetz formula 
\begin{align*} 
 \left( \gamma _{(ij)} ^{m_1 , m_2} \right)_{\ast} 
( \Xi ) 
= 
\Xi + \frac{-c-1}{\left\langle \Delta _{(ij)} ^{m_1 , m_2} , 
{\Delta _{(ij)} ^{m_1 , m_2} }^{\vee} \right\rangle} 
\left\langle \Xi , {\Delta _{(ij)} ^{m_1 , m_2} }^{\vee} \right\rangle 
\Delta _{(ij)} ^{m_1 , m_2} ,  
\end{align*}  
we obtain the assertion. 
\end{proof}



{\scshape
\begin{flushright}
\begin{tabular}{l}
Department of Mathematics \\ 
Faculty of Science \\ 
 Kyoto University \\ 
Kitashirakawa-Oiwakechou, \\
Sakyouku, Kyoto 606-8502, \\ 
 Japan \\ 
{\upshape e-mail: koki@math.kyoto-u.ac.jp}\\\\
\end{tabular}
\end{flushright}
}


\end{document}